\documentclass[11pt]{article}
\usepackage{graphicx}

\usepackage{style}

\title{A note on matchings and co-matchings in bipartite graphs}
\author{Sida Li\thanks{Trinity College, University of Cambridge, United Kingdom. Email: \href{mailto:sl2190@cam.ac.uk}{sl2190@cam.ac.uk}.}}
\date{}

\begin{document}
\maketitle

\begin{abstract}
    A class of bipartite graphs is said to have the strong Erd\H{o}s-Hajnal property if there exists $\eps > 0$ such that every graph $((A, B), E)$ in the class contains a complete or empty induced subgraph with parts $X \subseteq A$, $Y \subseteq B$ where $|X| \ge \eps|A|$ and $|Y| \ge \eps|B|$. Scott, Seymour and Spirkl \cite{scott2023} proved that it is enough to forbid a forest and the bipartite complement of a forest. In this paper, we provide quantitative bounds on $\eps$ when we restrict to matchings. 
\end{abstract}

\section{Introduction}

Let $G = ((A, B), E)$ denote a bipartite graph with parts $A, B$ and with edge set $E$. The \emph{bicomplement} of a bipartite graph $G$ is the graph $G'$ given by $((A, B), (A \times B) \setminus E)$. We call the bicomplement of a complete bipartite graph an \emph{empty} bipartite graph, and the bicomplement of a matching a \emph{co-matching}. Let $G[X, Y]$ denote the subgraph induced on the vertex subsets $(X, Y)$, with $X \subseteq A$ and $Y \subseteq B$, and let such a pair $(X, Y)$ be \emph{pure} if $G[X, Y]$ is complete or empty. We say $G[X, Y]$ has \emph{size} $k$ if $\min\{|X|, |Y|\} = k$. 

The celebrated Erd\H{o}s-Hajnal conjecture \cite{erdos1989} posits that for every graph $H$, there exists a constant $\delta(H) > 0$ such that the following holds: every graph $G$ of order $n$ that does not contain $H$ (as an induced subgraph) has a clique or independent set of size at least $n^{\delta(H)}$. A class $\mathcal{G}$ of graphs is said to have the \emph{strong Erd\H{o}s-Hajnal property}, introduced by Fox and Pach \cite{fox2008}, if there exists $\eps > 0$ such that every $G \in \mathcal{G}$ has a pure pair $(X, Y)$ where $|X| \ge \eps|A|$ and $|Y| \ge \eps|B|$. Alon, Pach, Pinchasi, Radoičić and Sharir \cite{alon2005} proved that for a hereditary class of graphs, this implies every $G \in \mathcal{G}$ has a clique or independent set of size $n^{\delta(H)}$. 

For the class of $H$-free graphs, the strong Erd\H{o}s-Hajnal property forces both $H$ and $\overline{H}$ to be forests, which prescribes $H$ to have at most four vertices. However, Chudnovsky, Scott, Seymour and Spirkl \cite{chudnovsky2020} proved that for any forest $H$, forbidding both $H$ and $\overline{H}$ becomes sufficient for the strong Erd\H{o}s-Hajnal property. Scott, Seymour and Spirkl \cite{scott2023} went on to prove the full analogous result for bipartite graphs: for a family $\mathcal{F}$ of forbidden induced subgraphs, the class of $\mathcal{F}$-free bipartite graphs satisfies the strong Erd\H{o}s-Hajnal property if and only if $\mathcal{F}$ contains a forest and the bicomplement of a forest. 

In this paper, we consider when the forest is a matching and provide quantitative bounds on $\eps$ depending on the sizes of our forbidden matchings and co-matchings. 

\begin{thm}\label{thm:graph-lb}
    Let $G$ be a bipartite graph that does not contain a matching of size $k \ge 2$ and does not contain a co-matching of size $\ell \ge 2$. Then there exists a pure pair $(X, Y)$ with $X \subseteq A$, $Y \subseteq B$ such that:
    \[|X| \ge \frac{4}{13 {k+\ell-4 \choose k-2} - 5}|A| \text{ and } |Y| \ge \frac{4}{13 {k+\ell-4 \choose k-2} - 5}|B|.\]
\end{thm}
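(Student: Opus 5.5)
Write $f(k,\ell)$ for the best constant we can guarantee, and $N(k,\ell)=\binom{k+\ell-4}{k-2}$. The binomial coefficient satisfies Pascal's rule $N(k,\ell)=N(k-1,\ell)+N(k,\ell-1)$, so I will prove the bound by induction on $k+\ell$. The claim is that $1/f(k,\ell)$ satisfies a linear recursion of the shape $g(k,\ell)\le g(k-1,\ell)+g(k,\ell-1)+c$. Then $g = aN - b$ with $a=13/4$ and $b=5/4$ closes the induction whenever the constant works out: $aN(k-1,\ell)-b + aN(k,\ell-1)-b + c = aN(k,\ell) - (2b-c)$, so we need $c\le b$. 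The target bound is $|X|\ge |A|/g$ with $g=\tfrac{13N-5}{4}$. The base case is $k=2$ or $\ell=2$, where $N=1$ and $g=2$, so we want a pure pair of half size. For $k=2$, $G$ has no induced $2K_2$, so the neighbourhoods of $A$ form a chain. Order $A$ by degree and take the vertex $a$ of median degree. Either $|N(a)|\ge|B|/2$, and then the half of $A$ above $a$ is complete to $N(a)$; or $|N(a)|<|B|/2$, and then the half of $A$ below $a$ is anticomplete to $B\setminus N(a)$. Rounding needs care. The case $\ell=2$ follows by bicomplementing.

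For the inductive step, suppose $k,\ell\ge3$. Pick a vertex $a\in A$ and let $B_1=N(a)$, $B_0=B\setminus N(a)$. Inside $G[A\setminus\{a\},B_0]$ there is no matching of size $k-1$, since $a$ would extend it via any neighbour in $B_1$ (if $B_1\neq\emptyset$). Inside $G[A,B_1]$ there is no co-matching of size $\ell-1$, since $a$ together with a non-neighbour extends it. These statements are not quite right as written: extending a matching with $(a,b)$ requires $b$ to be anticomplete to the rest of the matching, which is not automatic. So I would instead split by pairs. Choose the split so that one side loses one from $k$ and the other side loses one from $\ell$. The cleaner route is to choose sets $A'$, $B'$, where $A'$ has roughly proportion $\alpha$ of $A$ and $B'$ has roughly proportion $\beta$ of $B$. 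Either $G[A',B']$ is pure, which is done, or we can find an edge $ab$ such that the pair of non-neighbourhoods $(A\setminus N(b), B\setminus N(a))$ is large. The induced graph on that pair has no $(k-1)$-matching, since $ab$ appended to any matching there gives an induced matching. Symmetrically, a non-edge whose pair of neighbourhoods is large gives an induced subgraph with no $(\ell-1)$-co-matching. A counting argument over the degree distribution should produce a vertex pair whose relevant neighbourhoods each cover a fixed fraction of $A$ and of $B$. This is what converts the additive relation into the recursion on $g$.

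The main obstacle is this density step. We need a sparse/dense dichotomy: if every $a\in A$ has degree at most $\delta|B|$ and every $b$ has degree at most $\delta|A|$, we still need an edge $ab$ whose common non-neighbourhood pair is large, with fractions like $1-2\delta$. Otherwise some vertex has large degree and we pass to the dense case. Balancing the thresholds to get exactly the constants $13/4$ and $5/4$, with loss $c\le b$, is the calculation I expect to be hardest. I would first try taking vertices of median degree on each side, with an appropriate threshold, and verify that the recursion $g(k,\ell)\le g(k-1,\ell)+g(k,\ell-1)+\tfrac54$ holds. If the fractions only compose multiplicatively, I would instead maintain the induction on the quantity $1/\varepsilon$ by passing to subgraphs on the parts of proportion $p$ and $1-p$ chosen adaptively, so that $1/\varepsilon$ adds across the two branches.
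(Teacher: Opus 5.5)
\paragraph{Verdict: genuine gap in the inductive step.}
Several parts of your proposal are correct:
\begin{itemize}
\item The base case is fine: the chain structure on $A$ alone, together with a median vertex, suffices.
\item Your bookkeeping is exactly right: $g=\frac{13}{4}N-\frac54$ closes under $g(k,\ell)\le g(k-1,\ell)+g(k,\ell-1)+c$ precisely when $c\le\frac54$.
\item The reduction you finally settle on is the paper's key step. An edge $ab$ whose pair $(A\setminus N(b),B\setminus N(a))$ is large carries no induced $(k-1)$-matching. A non-edge whose pair $(N(b),N(a))$ is large carries no $(\ell-1)$-co-matching.
\end{itemize}
The gap is everything you defer as ``the main obstacle''. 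You never show that, absent a large pure pair, such an edge or non-edge must exist. Nor do you show that the losses yield an additive constant $c\le\frac54$. So the inductive step is not proved.

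The dichotomy you sketch would not supply this either. ``Some vertex has large degree'' gives no non-edge between two high-degree vertices. Median-degree thresholds do not give the fixed proportions of (non-)neighbourhood that the inductive hypothesis needs.

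\paragraph{The missing argument: a simultaneous degree split of both sides.}
Let $\lambda=g(k-1,\ell)/g(k,\ell)$ and $\mu=g(k,\ell-1)/g(k,\ell)$. The fractions do compose multiplicatively: a pair of proportions $>\lambda$ yields, by induction, a pure pair of proportion $\lambda/g(k-1,\ell)=1/g(k,\ell)$.

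Assume no pure pair of proportion $1/g(k,\ell)$ exists. For thresholds $\alpha,\beta\in[\mu,1-\lambda]$, let $A^+_\alpha=\{x\in A:|N(x)|\ge\alpha|B|\}$ and $A^-_\alpha=A\setminus A^+_\alpha$, and define $B^\pm_\beta$ likewise.
\begin{enumerate}
\item An edge between $A^-_\alpha$ and $B^-_\beta$, or a non-edge between $A^+_\alpha$ and $B^+_\beta$, triggers your reduction. Hence $G[A^-_\alpha,B^-_\beta]$ is empty and $G[A^+_\alpha,B^+_\beta]$ is complete, so each has a side of proportion below $1/g$.
\item Take $\alpha=\beta=1-\lambda$ and assume without loss of generality that $|A^-_{1-\lambda}|<|A|/g$. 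Then $A^+_{1-\lambda}$ is large, which forces $|B^+_\beta|<|B|/g$ for every $\beta$, in particular for $\beta=\mu$.
\item Double count $|E|$. On the $A$ side, vertices of $A^+_{1-\lambda}$ have degree $\ge(1-\lambda)|B|$. On the $B$ side, vertices of $B^-_\mu$ have degree $<\mu|A|$, and all others have degree at most $|A|$. This gives $(1-1/g)(2-\lambda-\mu)<1$.
\item This inequality is false once $g(k,\ell)$ is at least the larger root $s^\ast$ of $s^2-(w+2)s+w=0$, where $w=g(k-1,\ell)+g(k,\ell-1)$.
\end{enumerate}
That root is $s^\ast=\frac12\bigl(w+2+\sqrt{w^2+4}\bigr)\le w+\frac54$ for $w\ge4$. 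This is where $c=\frac54$ comes from. It also guarantees $\mu\le 1-\lambda$, so the threshold interval is non-empty.
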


In particular, we extract the following bound for the diagonal case. 

\begin{cor}\label{cor:diagonal-lb}
    Let $G$ be a bipartite graph that does not contain a matching or co-matching of size $t \ge 2$. Then there exists a pure pair $(X, Y)$ with $X \subseteq A$, $Y \subseteq B$ such that:
    \[|X| \ge \left(\frac{64\sqrt{\pi}}{13} + o(1)\right) \frac{\sqrt{t}}{4^t}|A| \text{ and } |Y| \ge \left(\frac{64\sqrt{\pi}}{13} + o(1)\right) \frac{\sqrt{t}}{4^t}|B|.\]
\end{cor}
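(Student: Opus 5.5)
This is a direct specialisation of Theorem~\ref{thm:graph-lb} with $k=\ell=t$, followed by an asymptotic estimate of the central binomial coefficient. No new combinatorics is needed. Setting $k=\ell=t\ge 2$, the theorem gives a pure pair $(X,Y)$ with
\[
|X|\ge \frac{4}{13\binom{2t-4}{t-2}-5}\,|A| \quad\text{and}\quad |Y|\ge \frac{4}{13\binom{2t-4}{t-2}-5}\,|B|.
\]
It then remains to show
\[
\frac{4}{13\binom{2t-4}{t-2}-5}=\left(\frac{64\sqrt{\pi}}{13}+o(1)\right)\frac{\sqrt t}{4^t}.
\]

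For the estimate, put $m=t-2$ and use the standard Stirling consequence $\binom{2m}{m}=(1+o(1))\frac{4^m}{\sqrt{\pi m}}$. Since $4^m=4^t/16$ and $\sqrt{m}=(1+o(1))\sqrt t$, this gives
\[
\binom{2t-4}{t-2}=(1+o(1))\frac{4^t}{16\sqrt{\pi t}}.
\]
The binomial coefficient tends to infinity, so the constant $-5$ in the denominator is absorbed into the $1+o(1)$ factor. Therefore
\[
\frac{4}{13\binom{2t-4}{t-2}-5}=(1+o(1))\frac{4\cdot 16\sqrt{\pi t}}{13\cdot 4^t}=\left(\frac{64\sqrt\pi}{13}+o(1)\right)\frac{\sqrt t}{4^t}.
\]

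There is no real obstacle. The only care needed is bookkeeping: the shift from $4^{t-2}$ to $4^t$ produces the factor $16$, which combines with the $4$ in the numerator to give $64$, and replacing $\sqrt{t-2}$ by $\sqrt t$ introduces only a $1+o(1)$ factor. Read as an asymptotic statement with $o(1)\to 0$ as $t\to\infty$, the bound follows directly from Theorem~\ref{thm:graph-lb}.
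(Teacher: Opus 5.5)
Your proposal is correct and matches the paper's argument: the paper also specialises the bound $s_{k,\ell} \le \frac{13}{4}\binom{k+\ell-4}{k-2} - \frac54$ to $k=\ell=t$ and applies Stirling's approximation to the central binomial coefficient, obtaining $s_{t,t} \le \left(\frac{13}{64}+o(1)\right)\frac{4^t}{\sqrt{\pi t}}$. Your bookkeeping is right, including the factor $16$ from $4^{t-2}$ and absorbing the $-5$ into the $1+o(1)$.
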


We also provide a corresponding construction with a different exponential base. 

\begin{thm}\label{thm:graph-ub}
    Let $t \ge 2$. For all sufficiently large $m, n$, there exists a bipartite graph $G$ with $|A| = m$, $|B| = n$ such that:
    \begin{outline}
        \1 $G$ does not contain a matching or co-matching of size $t$,
        \1 $G$ has no pure pairs $(X, Y)$ with $X \subseteq A$, $Y \subseteq B$ such that:
        \[|X| \ge \left(\sqrt{\frac{e}{2}} + o(1)\right)\frac{\sqrt{t}}{2^{t/2}}|A| \text{ and } |Y| \ge \left(\sqrt{\frac{e}{2}} + o(1)\right) \frac{\sqrt{t}}{2^{t/2}}|B|.\]
    \end{outline}
\end{thm}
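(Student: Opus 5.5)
The plan is a blow-up of a small random bipartite graph. First I would build an auxiliary bipartite graph $H=((P,Q),F)$ with $|P|=|Q|=N$, where
\[N=\left(\sqrt{\tfrac{2}{e}}+o(1)\right)\sqrt{t}\,2^{t/2}\]
up to lower-order factors. It should have two properties. First, $H$ contains no (induced) matching and no co-matching of size $t$. Second, $H$ has no pure pair $(X',Y')$ with $|X'|,|Y'|\ge s$, where $s=t+O(\log t)=(1+o(1))t$. I would then let $G$ be the blow-up of $H$. Partition $A$ into $N$ blobs of sizes $\lfloor m/N\rfloor$ or $\lceil m/N\rceil$, one blob for each vertex of $P$, and partition $B$ into $N$ blobs in the same way for $Q$. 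Join two blobs completely if the corresponding vertices are adjacent in $H$, and leave them empty otherwise. This is possible once $m,n\ge N$, which is where ``sufficiently large'' is used.

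Next I would check that the blow-up preserves both properties. Vertices in a common blob are twins. Suppose an induced matching or co-matching of size $t$ in $G$ used two twins $a,a'$ with partners $b,b'$. Then $a\sim b$ if and only if $a'\sim b$, which contradicts the pattern. Hence such a structure uses distinct blobs and projects to one of size $t$ in $H$. Now take a pure pair $(X,Y)$ in $G$. The sets of blobs it meets form a pure pair in $H$, and $X$ meets at least $|X|/\lceil m/N\rceil$ blobs. So if $|X|\ge \eps m$ and $|Y|\ge\eps n$ with $\eps=(s+1)/N$, then for large $m,n$ we would get a pure pair of size at least $s$ in $H$, which is a contradiction. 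This gives
\[\eps\approx \frac{t}{N}=\left(\sqrt{\tfrac{e}{2}}+o(1)\right)\frac{\sqrt t}{2^{t/2}},\]
as required.

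The remaining task is to construct $H$, and I would do this probabilistically. Take a random bipartite graph on $(M,M)$ vertices with edge probability $1/2$, and use deletion (alteration). The expected number of matchings plus co-matchings of size $t$ is
\[\mu=2\binom{M}{t}^2 t!\,2^{-t^2}\approx \frac{2M^{2t}}{t!\,2^{t^2}}.\]
Delete one $P$-vertex from each such structure. The expected number of $s\times s$ pure pairs is at most
\[2\binom{M}{s}^2 2^{-s^2}\le 2\left(\frac{eM}{s}\right)^{2s}2^{-s^2},\]
and this is $o(1)$ once $s\ge 2\log_2(eM/s)+\omega(1)$. Since $M=2^{t/2+O(\log t)}$, this needs only $s=t+O(\log t)$. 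Deleting vertices creates no new pure pairs. We can therefore keep $N=M-O(\mu)$ vertices on each side, trimming $Q$ to match.

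The main obstacle is the constant. If we simply require $\mu<1$, we get $M^2\approx (t!)^{1/t}2^{t}\approx (t/e)2^t$. That yields only $\sqrt{e}\sqrt t/2^{t/2}$, which is off by $\sqrt2$. To gain the factor $\sqrt2$ in $N$, I would try to tolerate $\mu$ comparable to $M$ by deleting vertices. This costs only a constant fraction of the vertices, and $\mu\le M/2$ allows $M^{2t-1}\lesssim t!\,2^{t^2}$. I would also lower the pure-pair threshold $s$ to $t-\log_2 t+O(1)$, since $2\log_2(eM/s)=t-\log_2 t+O(1)$. I would optimise these two choices jointly, and possibly replace the symmetric count by a count that treats the $t!$ pairings more carefully. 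This sharp second-order bookkeeping is the step I expect to be delicate: it has to show that the losses are $1+o(1)$ factors and produce exactly $N\sim\sqrt{2t/e}\,2^{t/2}$. If plain deletion falls short, I would fall back on a Lovász Local Lemma version of the same computation.
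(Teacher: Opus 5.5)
Your blow-up step is correct and matches how the paper finishes: twins cannot both occur in an induced matching or co-matching, and a large pure pair in $G$ projects onto a pure pair of $H$ meeting about $\varepsilon N$ blobs on each side. The gap is in building $H$, which is where the whole constant is decided. The deletion bound $N\ge M-\mu$ you propose cannot reach the stated constant.

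Your expectation $\mu\approx 2M^{2t}/(t!\,2^{t^2})$ is already exact, so recounting the pairings gains nothing. Keeping $M-\mu$ of order $M$ forces $\mu\lesssim M$, i.e.\ $M^{2t-1}\lesssim t!\,2^{t^2}$. Since $t^2/(2t-1)=t/2+1/4+O(1/t)$ and $(t!)^{1/(2t-1)}=(1+o(1))\sqrt{t/e}$, this gives at best $N=(2^{1/4}+o(1))\sqrt{t/e}\,2^{t/2}$. Hence $\varepsilon=(2^{-1/4}\sqrt{e}+o(1))\sqrt t/2^{t/2}$: deletion recovers only a factor $2^{1/4}$ of the missing $\sqrt2$, not the full $\sqrt2$ you expected. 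Lowering $s$ to $t-\log_2 t+O(1)$ does not help, because $\varepsilon\approx s/N$ and this changes $s$ only by a factor $1-o(1)$. As written, your argument therefore proves a weaker constant, and the decisive step is left to an unexecuted fallback.

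The missing ingredient is exactly that Lov\'asz Local Lemma computation, which is the paper's proof.
\begin{itemize}
\item Take $G'\sim G(L,L,1/2)$ with $L=(1-\frac{2\log t}{t})\sqrt{2/e}\,\sqrt t\,2^{t/2}$.
\item For every $(X,Y)$ with $|X|=|Y|=t$, let the bad event be that $G'[X,Y]$ is a matching, co-matching, complete or empty. This has probability $2(t!+1)2^{-t^2}$. Folding the pure $t\times t$ pairs into the same family lets you take $s=t$ with no separate count.
\item Each event is determined by the edges in $X\times Y$. So it need only be adjacent in the dependency graph to events sharing a vertex with it on \emph{both} sides. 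This gives degree $\Delta\le (t\binom{L}{t-1})^2\le t^4L^{2t-2}/(t!)^2$.
\item The LLL condition $e\,\mathbb{P}(A_{X,Y})(\Delta+1)<1$ then amounts to $L^{2t-2}\lesssim t!\,2^{t^2}$ up to $\mathrm{poly}(t)$ factors. Two vertex choices are saved compared with your $M^{2t-1}$ bound, and $2^{t^2/(2t-2)}=2^{t/2+1/2+O(1/t)}$ is precisely the full extra $\sqrt2$ over the first moment.
\item The factor $1-\frac{2\log t}{t}$ in $L$ absorbs the polynomial losses: the LLL expression is at most about $3t^4\bigl(eL^{2-2/t}/(t2^t)\bigr)^t\to0$.
\end{itemize}
Blowing up this $G'$ exactly as you describe then gives $\varepsilon\approx t/L=(\sqrt{e/2}+o(1))\sqrt t/2^{t/2}$.
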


We also investigate off-diagonal behaviour. On the extreme end, a matching of size 2 is isomorphic to its own bicomplement. Substituting $k = 2$ into Theorem \ref{thm:graph-lb}, we obtain a pure pair $(X, Y)$ such that:
\[|X| \ge \frac12 |A| \text{ and } |Y| \ge \frac12 |B|.\]
Furthermore, we demonstrate that equality holds for arbitrarily large $G$. 

Thus, our first interesting case is $k = 3$, whereby examining the bound in Theorem \ref{thm:graph-lb} yields polynomial behaviour in $\ell$. 

\begin{cor}\label{cor:k=3-lb}
    Let $G$ be a bipartite graph that does not contain a matching of size 3 and does not contain a co-matching of size $\ell \ge 2$. Then there exists a pure pair $(X, Y)$ with $X \subseteq A$, $Y \subseteq B$ such that:
    \[|X| \ge \frac{1}{3\ell + \frac13 \log\ell - 3}|A| \text{ and } |Y| \ge \frac{1}{3\ell + \frac13 \log\ell - 3}|B|.\]
\end{cor}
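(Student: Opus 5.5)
Substituting $k=3$ into Theorem~\ref{thm:graph-lb} only gives $|X|\ge \frac{4}{13\ell-18}|A|$, whose leading constant $13/4$ is worse than the $3$ claimed. So I would not derive Corollary~\ref{cor:k=3-lb} from Theorem~\ref{thm:graph-lb} directly. Instead I would rerun a recursion specialised to $k=3$ and prove the bound by induction on $\ell$. Write $\eps_\ell$ for the guaranteed fraction and $c_\ell=1/\eps_\ell$. The target is $c_\ell\le 3\ell+\frac13\log\ell-3$. For the base case $\ell=2$, forbidding a co-matching of size $2$ is the same as forbidding a matching of size $2$, so the $k=2$ case gives a pure pair of fraction $\frac12$, well within the target.

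The two structural reductions I would use are the following.

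\textbf{Non-edge step.} If $a\in A$, $b\in B$ is a non-edge, then $G[N(b),N(a)]$ contains no co-matching of size $\ell-1$, since one would extend by $(a,b)$ to a co-matching of size $\ell$. Hence, by induction, it has a pure pair occupying an $\eps_{\ell-1}$ fraction of $N(b)$ and of $N(a)$.

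\textbf{Edge step.} If $ab$ is an edge, then $G[A\setminus N(b),\,B\setminus N(a)]$ contains no matching of size $2$, since one would extend by $ab$ to a matching of size $3$. By the $k=2$ case it has a pure pair with half of each side.

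Now suppose, for contradiction, that there is no pure pair of fraction $\eps=\eps_\ell$. Then every vertex of $A$ has degree in $(\eps|B|,(1-\eps)|B|)$, and similarly for $B$, since otherwise a single vertex with its (non)neighbourhood would already be pure.

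The plan is a case split on the degree profile, governed by a threshold $\theta$ to be optimised.
\begin{itemize}
\item[(i)] If some non-edge $ab$ has $|N(b)|\ge \theta|A|$ and $|N(a)|\ge\theta|B|$, the non-edge step yields a pure pair of fraction $\theta\,\eps_{\ell-1}$.
\item[(ii)] Otherwise, every non-edge has an endpoint of degree below $\theta$. Then an edge $ab$ with both endpoints of low degree gives complements of size at least $(1-\theta)$. The edge step then yields a pure pair of fraction $\frac{1-\theta}{2}$.
\item[(iii)] The remaining situation is that the low-degree vertices on one side are all complete to the high-degree vertices on the other. In this case the set of low-degree vertices on one side together with the high-degree vertices on the other side should itself form (after averaging) a large complete pair, or else force case (i) or (ii).
\end{itemize}
Balancing $\theta\,\eps_{\ell-1}$ against the constant-order terms gives a recursion of the form $c_\ell\le c_{\ell-1}+3+O(1/\ell)$-type corrections. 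Summing this from the base case should produce $3\ell-3$ plus a harmonic-type sum, which is where the $\frac13\log\ell$ arises.

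The main obstacle I expect is step (iii) and the exact optimisation of $\theta$. To get the additive constant $3$ per level, rather than a multiplicative loss, I must choose $\theta=1-\Theta(1/c_{\ell-1})$. This means the non-edge step is applied inside sets of almost full size. I must then verify that the degenerate configuration in (iii) costs only about $\frac{1}{3c_\ell}$ of the vertices, so that the leftover error is of order $\frac{1}{3\ell}$ per step. Summing these errors gives the $\frac13\log\ell$ term. I would first try to handle (iii) by counting non-edges between high-degree vertices on the two sides: either such a non-edge exists, which is case (i), or the high-degree parts are complete to each other, giving a large complete pair directly.
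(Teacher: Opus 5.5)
Your two reduction steps are exactly the inductive moves in the proof of Theorem~\ref{thm:graph-lb}. However, the proposal has a genuine gap at (iii), which is precisely where the bound is decided.

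\textbf{Step (iii) is misstated and does not close.} If (i) and (ii) fail at threshold $\theta$, what you learn is that high-degree vertices of $A$ are complete to high-degree vertices of $B$, and low-degree vertices of $A$ are anticomplete to low-degree vertices of $B$. Nothing follows about low--high pairs. These two facts already give a pure pair unless one side, say $A$, is almost entirely high-degree while $B$ is almost entirely low-degree. Your proposed fix (look for a non-edge between high-degree vertices) is useless there, because the high-degree part of $B$ has fewer than $\varepsilon|B|$ vertices.

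With a single threshold this configuration is not contradicted by counting. The two sides give only $(1-\varepsilon)\theta mn \le |E| < \theta mn + \varepsilon mn$, which is no contradiction.

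The constraints from (i) and (ii), namely $\theta \ge c_{\ell-1}/c_\ell$ and $1-\theta \ge 2/c_\ell$, are already satisfiable with $c_\ell = c_{\ell-1}+2$. The extra $1+\frac{1}{c_\ell-1}$ per level is exactly what the missing step must pay for. So the recursion $c_\ell \le c_{\ell-1}+3+O(1/\ell)$ is asserted rather than derived.

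Separately, your claim that every degree lies in $(\varepsilon|B|,(1-\varepsilon)|B|)$ is false. A single vertex with its (non-)neighbourhood has $|X|=1$, not $|X|\ge\varepsilon|A|$. Luckily you never use this.

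\textbf{The missing idea: two thresholds separated by a gap.} Put $\varepsilon = 1/c_\ell$, $\lambda = 2/c_\ell$ and $\mu = c_{\ell-1}/c_\ell$. Take any threshold $\alpha$ on $A$ and $\beta$ on $B$ with $\alpha,\beta \in [\mu, 1-\lambda]$. Your steps (i) and (ii) still go through: a high--high non-edge has neighbourhoods of relative size at least $\mu$, and a low--low edge has non-neighbourhoods of relative size more than $\lambda$. So high--high is complete and low--low is empty for \emph{every} such pair $(\alpha,\beta)$.

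Taking $\alpha=\beta=1-\lambda$ gives, after possibly swapping sides, fewer than $\varepsilon m$ vertices of $A$ of degree below $(1-\lambda)n$. Pairing $\alpha = 1-\lambda$ with $\beta = \mu$ then forces fewer than $\varepsilon n$ vertices of $B$ to have degree at least $\mu m$. Double counting now gives $(1-\varepsilon)(1-\lambda)mn \le |E| < mn - (1-\varepsilon)(1-\mu)mn$. This is impossible once $(1-\frac{1}{c_\ell})(2-\frac{2+c_{\ell-1}}{c_\ell}) \ge 1$, that is, for $c_\ell - c_{\ell-1} = 3 + \frac{1}{c_\ell-1}$.

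\textbf{This is the paper's route.} You are right that the binomial form of Theorem~\ref{thm:graph-lb} is too weak. But nothing needs to be rerun: its proof already establishes $p(k,\ell) \ge 1/s_{k,\ell}$. The corollary then follows by these steps:
\begin{enumerate}
\item Set $a_\ell = s_{3,\ell}$ and derive $a_\ell - a_{\ell-1} = 3 + \frac{1}{a_\ell-1}$.
\item Use $a_\ell \ge 3\ell-4$ to bound the correction by $\frac{1}{3\ell-5}$.
\item Telescope from $a_2 = 2$ and compare the sum with an integral to get $a_\ell \le 3\ell + \frac{1}{3}\log\ell - 3$.
\end{enumerate}
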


We also provide an algebraic construction, though it leaves room for improvement. 

\begin{thm}\label{thm:k=3-ub}
    Let $\ell \ge 2$. For all sufficiently large $m, n$, there exists a bipartite graph $G$ with $|A| = m$, $|B| = n$ such that:
    \begin{outline}
        \1 $G$ does not contain a matching of size 3 or a co-matching of size $\ell$,
        \1 $G$ has no pure pairs $(X, Y)$ with $X \subseteq A$, $Y \subseteq B$ such that:
        \[|X| \ge \frac{2^{3/5} + o(1)}{\ell^{1/5}} |A| \text{ and } |Y| \ge \frac{2^{3/5} + o(1)}{\ell^{1/5}} |B|.\]
    \end{outline}
\end{thm}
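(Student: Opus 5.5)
The plan is to reduce Theorem~\ref{thm:k=3-ub} to building a single fixed ``base'' bipartite graph $H$ with parts $[p],[q]$, and then to blow it up. Given $H$, replace each vertex $i\in[p]$ by a class $A_i$ of size $\lfloor m/p\rfloor$ or $\lceil m/p\rceil$, and each $j\in[q]$ by a class $B_j$ of size about $n/q$. Put $A_i$ complete to $B_j$ if $ij\in E(H)$ and empty otherwise. Three properties should transfer, and I would check them first.
\begin{outline}
\1 Two vertices in the same class have identical neighbourhoods. So an induced matching or co-matching of size $s\ge 2$ in the blow-up uses distinct classes on each side, and hence projects to an induced matching or co-matching of size $s$ in $H$.
\1 If $(X,Y)$ is pure in the blow-up, let $S$ and $T$ be the sets of classes met by $X$ and $Y$. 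Then $(S,T)$ is pure in $H$, and $|X|\le |S|\lceil m/p\rceil$, $|Y|\le |T|\lceil n/q\rceil$.
\1 Consequently, if $H$ has no pure pair with $|S|\ge \eps p$ and $|T|\ge \eps q$, the blow-up has none with $|X|\ge(\eps+o(1))m$ and $|Y|\ge(\eps+o(1))n$. The $o(1)$ absorbs the rounding once $m,n$ are large.
\end{outline}
So it suffices to find $H$ with no induced $3K_2$, no induced co-matching of size $\ell$, and no pure pair of relative size $\eps\approx 2^{3/5}\ell^{-1/5}$.

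For $H$ I would use an algebraic family indexed by a parameter $r$, for instance over a finite field $\mathbb F_r$. One side would be points (or low-degree polynomials), the other side would be lines or evaluation points, and adjacency would be given by an inequality or incidence-type relation that is ``chain-like'' locally. The reason for this choice is that graphs with no induced $2K_2$ are chain graphs, and a union of two chain structures avoids induced $3K_2$. The intended shape is therefore $H$ built from two interleaved chain (half-graph) structures, with the interleaving pattern governed by an algebraic map so that it looks pseudorandom at scale $1/r$.

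The three properties of $H$ would then be handled as follows.
\begin{outline}
\1 \emph{No induced $3K_2$.} This should follow combinatorially. An induced matching can contain at most one edge ``from each chain'' in an incompatible order, and a pigeonhole argument on three edges gives a contradiction.
\1 \emph{No large pure pairs.} Every pure pair should be confined to a small box of relative size $O(1/r)$ in one coordinate. The algebraic structure (a bounded number of solutions of a polynomial equation) controls how large a complete or empty rectangle can be.
\1 \emph{No co-matching of size $\ell$.} Here the largest induced co-matching should be polynomial in $r$, namely of order $r^5$ up to constants. It arises from counting configurations in which each $a_i$ misses exactly $b_i$, and these are constrained by degree considerations.
\end{outline}

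Finally, I would optimise constants. Writing $\eps=c/r$ and the co-matching bound as $\ell\approx \kappa r^5$, eliminating $r$ gives $\eps\approx c\,\kappa^{1/5}\ell^{-1/5}$. Tuning the construction so that $c\,\kappa^{1/5}=2^{3/5}$ matches the stated bound. For intermediate $\ell$ I would take the largest admissible $r$, which costs only a $1+o(1)$ factor.

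The main obstacle is the co-matching bound: showing that the largest induced co-matching in $H$ is only polynomial, of degree $5$, in $r$, while pure pairs are forced down to size $\Theta(1/r)$. This tension is exactly what sets the exponent $1/5$. I would attack it by encoding a co-matching as a system of polynomial conditions and bounding the number of solutions. If that fails, I would fall back on a counting or double-counting argument over the chain structure.
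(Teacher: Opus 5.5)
Your blow-up reduction is fine; it is exactly how the paper passes from a fixed base graph to arbitrary $m,n$. The gap is that you never produce the base graph $H$. Everything after the reduction is a template phrased with ``should''. Worse, the one concrete design principle you commit to cannot work.

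\textbf{Why two chain graphs cannot suffice.} Your pigeonhole argument against induced $3K_2$ needs $E(H)=E(C_1)\cup E(C_2)$ with $C_1,C_2$ chain graphs. Chain graphs on vertex subsets can be padded with isolated vertices, so assume both live on all of $(A,B)$. Then $H$ always has a pure pair of relative size $1/4$:
\begin{itemize}
\item By the base case in the proof of Theorem~\ref{thm:graph-lb}, $C_1$ has a pure pair $(X_1,Y_1)$ with $|X_1|\ge |A|/2$ and $|Y_1|\ge |B|/2$.
\item If it is complete in $C_1$, it is complete in $H$.
\item If it is empty in $C_1$, then $H[X_1,Y_1]=C_2[X_1,Y_1]$ is again a chain graph, so it has a pure pair of half the size.
\end{itemize}
So any such $H$ violates the target $2^{3/5}\ell^{-1/5}$ for all large $\ell$; the main term drops below $1/4$ once $\ell>2^{13}$. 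The same argument shows a union of $k$ chain graphs has pure pairs of proportion $2^{-k}$. You would therefore need unboundedly many chains, and then three matching edges can sit in three different chains, so the pigeonhole collapses. Your claims that pure pairs have relative size $O(1/r)$ and co-matchings have size $O(r^5)$ are reverse-engineered from the target exponent, not derived from any structure.

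\textbf{The missing idea.} Take the bicomplement of a sparse, spectrally pseudorandom bipartite graph of girth at least $8$. The paper uses $H_q$, the point-line incidence graph of the symplectic generalised quadrangle $W(3,q)$. It has $N_q=(q+1)(q^2+1)$ vertices per side, is $(q+1)$-regular, has girth $8$, and its nonzero singular values are $q+1$ and $\sqrt{2q}$. Let $G_q$ be its bicomplement. The four required properties then follow directly:
\begin{itemize}
\item An induced $3K_2$ in $G_q$ is a $6$-cycle in $H_q$, which girth rules out.
\item A co-matching of size $s$ in $G_q$ is an induced matching in $H_q$. The expander mixing lemma then gives $s\le (q^2+1)(1+\sqrt{2q})\approx \sqrt2\,q^{5/2}$.
\item An empty pair in $G_q$ is a complete pair in $H_q$. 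Since $H_q$ has no $C_4$, one side has size at most $1$.
\item A complete pair in $G_q$ is an empty pair in $H_q$. The mixing lemma gives $\sqrt{|X||Y|}\le \frac{\sqrt{2q}}{q+1}N_q$.
\end{itemize}
With $r=\sqrt q$ this is exactly your bookkeeping $\varepsilon=c/r$, $\ell=\kappa r^5$, with $c=\kappa=\sqrt2$. Taking $q$ prime near $(\ell/\sqrt2)^{2/5}$ then yields $2^{3/5}\ell^{-1/5}$. The exponent $1/5$ comes from the spectral gap of a girth-$8$ graph, not from any chain structure.
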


{\bf Log-rank conjecture:} Our work on matchings was initially inspired by the renowned log-rank conjecture. For a Boolean function $f:X \times Y \to \{0, 1\}$, let the \emph{communication complexity} of $f$, denoted by $\cc(f)$, be the minimum number of bits exchanged by the optimal deterministic protocol designed to compute $f$. The \emph{log-rank conjecture}, proposed by Lovász and Saks \cite{lovasz1988}, states that for any Boolean function $f$, $\cc(f)$ is bounded above by a polynomial in the log of its rank, where $r = \rk(f) = \rk(M)$ for the matrix $M \in \{0, 1\}^{X \times Y}$ given by $M_{x, y} = f(x, y)$. 

A consequence of low communication complexity is that $M$ contains a large monochromatic (all 0's or all 1's) submatrix. Nisan and Wigderson \cite{nw1994} proved a recursive reduction that allowed recent progress on the log-rank conjecture to centre around finding large monochromatic submatrices in low-rank binary matrices. 

The best known lower bound, demonstrated by Göös, Pitassi and Watson \cite{goos2015}, provides functions where $\cc(f) = \tilde{\Omega}((\log r)^2)$. Building on the work of Lovett \cite{lovett2016}, Sudakov and Tomon \cite{sudakovtomon2024} established the best known upper bound of $\cc(f) = O(\sqrt{r})$ by proving the following. If $\rk(M) \le r$, then there exist $X \subseteq [m]$ and $Y \subseteq [n]$ where:
\[|X| \ge \frac{m}{2^{O(\sqrt{r})}} \text{ and } |Y| \ge \frac{n}{2^{O(\sqrt{r})}},\]
and $M[X \times Y]$ is monochromatic. 

Here we consider the problem with a weaker notion of rank. Permutation matrices and their complements are of full rank. Thus, given $\rk(M) \le r$, we know that $M$ does not contain a permutation submatrix of size $r+1$ or its complement, which can be interpreted as the canonical full-rank obstructions. By taking $G$ to be the corresponding bipartite graph with $|A| = m$, $|B| = n$, we get the diagonal strong Erd\H{o}s-Hajnal problem above. \\

{\bf Alternatives to rank:} We can compare this to results for other alternatives to rank. Balla, Hambardzumyan and Tomon \cite{balla2025} considered the \emph{factorisation norm} $\gamma_2$, defined by:
\[\gamma_2(M) := \min_{U, V : M = UV} ||U||_{\text{row}} ||V||_{\text{col}},\]
where $||U||_{\text{row}}$ denotes the maximum $\ell_2$-norm of the rows of $U$, and similarly for $||V||_{\text{col}}$. It satisfies $\rk(M) \ge \gamma_2^2(M)$. They prove that for $\gamma_2(M) \le \gamma$, there exists $X \subseteq [m]$ and $Y \subseteq [n]$ where:
\[|X| \ge \frac{m}{2^{O(\gamma^3)}} \text{ and } |Y| \ge \frac{n}{2^{O(\gamma^3)}},\]
and $M[X \times Y]$ is monochromatic. Furthermore, they show this is close to optimal with a construction for $M \in \{0, 1\}^{n \times n}$ with $\gamma_2(M) \le \gamma$ and no monochromatic submatrix of size at least $\frac{n}{2^{\gamma - 3}}$. Notably, this provides a `stronger' weakening of rank compared to ours, though no direct comparison can be made. 

We also consider replacing rank with VC-dimension. Binary matrices with bounded $\gamma_2$ have bounded VC-dimension and we also prove that a bipartite graph without large induced matchings also has bounded VC-dimension. As a strict weakening of both alternatives, the following result does not come as a surprise.

\begin{thm}\label{thm:vc-dim}
    For any $\alpha \in (0,1]$ and sufficiently large $n$, there exists a bipartite graph $G$ with $|A| = |B| = n$, such that:
    \begin{outline}
        \1 the VC-dimension of $G$ is at most 2,
        \1 $G$ has no pure pairs $(X, Y)$ of size $\alpha n$. \\
    \end{outline}
\end{thm}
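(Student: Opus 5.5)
We want a bipartite graph with VC-dimension at most 2 and no pure pair of linear size. The plan is to use a geometric incidence-type construction: let $A$ be points and $B$ be sets from a family of bounded VC-dimension, where the family is "spread out" enough that no linear-size pure pair exists. A natural choice is points versus lines in a finite projective plane. A point-line incidence graph contains no $C_4$, since two lines meet in at most one point. This gives two things at once. First, the VC-dimension is at most 2: to shatter a set of three points we would need a line containing all three and also lines containing exactly each pair, and then two distinct lines would share two points. (More carefully, shattering $\{p,q\}$ already needs a line through both and other lines separating them, which is allowed; shattering three points is not, by the $C_4$-freeness.) The same holds from the side of $B$. Second, a complete pair with $|X|,|Y|\ge 2$ is impossible.

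The real content is therefore to rule out large empty pairs. Take the projective plane of order $q$, with $N=q^2+q+1$ points and lines, so that the incidence graph is $(q+1)$-regular. For an empty pair $(X,Y)$ we apply the expander mixing lemma to the incidence graph. Its bipartite adjacency matrix $M$ satisfies $MM^T=qI+J$, so its second singular value is $\sqrt q$. The mixing lemma then gives
\[
\left|e(X,Y)-\frac{(q+1)|X||Y|}{N}\right|\le \sqrt{q}\sqrt{|X||Y|}.
\]
If $e(X,Y)=0$ and $|X|=|Y|=s$, this forces $\frac{(q+1)s^2}{N}\le \sqrt q\, s$, that is, $s\le \sqrt q\,N/(q+1)\approx N^{3/4}$. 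This is $o(N)$, so for $n=N$ there is no empty pair of size $\alpha n$ once $q$ is large.

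The final step is to handle arbitrary large $n$, not just $n=q^2+q+1$. Choose a prime $q$ with $q^2+q+1\ge n$ and $q^2+q+1=(1+o(1))n$, which Bertrand-type prime gaps allow. Then take an induced subgraph on $n$ points and $n$ lines. VC-dimension and the absence of pure pairs are both inherited by induced subgraphs, and the bound $s=O(N^{3/4})=o(n)$ still holds.

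The main obstacle is making the VC-dimension bound airtight, in particular checking the definition used in the paper (the VC-dimension of the neighbourhood set system on either side) and verifying that no three vertices are shattered, including the case where the trace of the empty set must also appear. Here we use that each three-point set needs a line containing all three points; this is either impossible when they are non-collinear, or, when they are collinear, the pairs must each be cut out by some line while the third point is excluded, which forces two lines to share two points. The rest is a routine spectral estimate.
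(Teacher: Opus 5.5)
Your argument is correct, but it takes a different route from the paper. The paper deduces Theorem \ref{thm:vc-dim} from Lemma \ref{lmm:vc-dim} (no induced matching of size $r+1$ implies VC-dimension at most $r$). It applies this to a graph with no induced matching of size $3$ and no linear pure pairs. That graph is the bicomplement of a bipartite graph of girth at least eight: the $W(3,q)$ graph $G_q$ of Theorem \ref{thm:w3q-complement}, or the bicomplement of a random high-girth graph. It is extended to all $n$ by equitable blow-ups for a fixed $q$ with $\eta_q<\alpha$.

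You instead use the projective-plane incidence graph itself. That graph does contain induced matchings of size $3$: take three non-concurrent lines, each with a point avoiding the other two. So the paper's lemma is unavailable to you. Instead you bound the VC-dimension directly. $C_4$-freeness means two neighbourhoods share at most one vertex, so the traces $\{p_1,p_2,p_3\}$ and $\{p_1,p_2\}$ cannot both occur. This holds under any convention for where the empty trace comes from.

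Your route buys the following:
\begin{itemize}
\item it needs only $C_4$-freeness and the easy identity $MM^T=qI+J$, rather than the strongly regular collinearity graph of $W(3,q)$;
\item it avoids blow-ups;
\item it gives a quantitatively stronger conclusion, with pure pairs of size only $O(n^{3/4})$.
\end{itemize}
The paper's route buys a stronger structural statement: the graph omits an induced $3$-matching altogether. That is the point of Section \ref{sec:families}: forbidding the $3$-matching alone (whose bicomplement $C_6$ is not a forest) does not give the strong Erd\H{o}s--Hajnal property. Your graph cannot serve that purpose.

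One small correction. Bertrand's postulate only gives a prime $q\in[\sqrt n,2\sqrt n]$, hence $N\le 4n+O(\sqrt n)$ rather than $N=(1+o(1))n$; the latter needs the prime number theorem. This is harmless. The bound $s\le \sqrt q\,N/(q+1)=O(n^{3/4})$ is still $o(n)$, and passing to an induced subgraph on $n$ points and $n$ lines preserves both the VC-dimension bound and the absence of pure pairs.
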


{\bf Hypergraphs:} The background for our graph-theoretic rank is the generalised log-rank problem for tensors, notably finding a simultaneous weakening of many popular notions of tensor rank. In particular, the identity tensor has ``full rank" in the regimes of slice, partition and geometric rank. Tao \cite{tao2016} proved that the slice rank of a diagonal tensor is the number of non-zero entries, then Naslund \cite{naslund2020} proved the same for partition rank and Kopparty, Moshkovitz and Zuiddam \cite{kopparty2023} for geometric rank. 

We establish that large monochromatic subtensors only arise in sparse or dense binary tensors, providing a contrast to the matrix case. Let $G = ((A_1, \dots, A_k), E)$ denote a $k$-partite, $k$-uniform hypergraph with parts $A_1, \dots, A_k$ and (hyper-)edges $E$. Our main result in this section is the following, where we generalise bicomplements to $k$-partite complements and pure pairs to \emph{pure boxes}. 

\begin{thm}\label{thm:general-hyper}
    Let $\alpha \in (0, 1]$, $k \ge 3$ be constants. For sufficiently large $n$, there exists a $k$-uniform, $k$-partite hypergraph $G$ with $|A_i| = n$ for all $i = 1, 2, \dots, k$, such that:
    \begin{outline}
        \1 $G$ does not contain a matching or co-matching of size 2,
        \1 $G$ has no pure boxes of size $\alpha n$.
    \end{outline}
\end{thm}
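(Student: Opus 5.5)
The plan is to build $G$ by iterating a small ``seed'' hypergraph, using a substitution (lexicographic) product that preserves the forbidden configurations and shrinks the largest pure box by a constant factor at each level. First, note what the forbidden configurations are for $k$-partite $k$-uniform hypergraphs. A matching of size $2$ is a box $\{a_1,b_1\}\times\dots\times\{a_k,b_k\}$ in which exactly the two ``diagonal'' tuples $(a_1,\dots,a_k)$ and $(b_1,\dots,b_k)$ are edges. A co-matching is the same with edges and non-edges swapped.

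\textbf{Seed.} Choose a small $k$-partite hypergraph $H$ on parts $[m]$ such that:
\begin{itemize}
\item $H$ has no induced matching or co-matching of size $2$;
\item every pure box $X_1\times\dots\times X_k$ of $H$ satisfies $\prod|X_i|\le \beta m^k$ for some $\beta<1$, or a fractional version of this for weighted vertex sets.
\end{itemize}
A candidate is a hypergraph defined by $\Sigma_i f_i(x_i)\in S$. For $S$ a half-line, the identity $s(a)+s(b)=s(m)+s(m')$ for complementary mixed tuples $m,m'$ rules out both forbidden configurations. Half-line thresholds are monotone, so they do have large pure boxes at a single level. That does not matter here: we only need $\beta<1$, which holds as soon as $H$ is not itself pure. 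For $k\ge3$ one can even take $H$ on two vertices per part: pick an edge set on $\{0,1\}^k$ that is neither a $2$-matching nor a $2$-co-matching pattern on any sub-box. Since all sub-boxes of $\{0,1\}^k$ with both vertices in every part form the whole cube, we just need the edge set not to equal $\{\mathbf 0,\mathbf 1\}$ or its complement.

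\textbf{Iteration.} Let vertices of part $i$ be strings in $[m]^r$. Declare $(x^1,\dots,x^k)$ an edge according to $H$ applied to the coordinates at the first level $j$ at which the $k$ strings' prefixes ``split''. I would make this precise by taking a tree-like product: at each level the parts are partitioned into blocks, and within a block the next coordinate decides. I then check the two properties.
\begin{itemize}
\item \emph{No $2$-matching.} Any $2$-matching in $G$ lies on a box $\{a_i,b_i\}$. Look at the first level where some $a_i,b_i$ differ. At that level the pattern of edges on the box is read off from a sub-box of $H$, or is constant on coordinates that agree. A case check should show that a diagonal-only pattern forces a diagonal-only pattern in $H$, contradicting the choice of $H$.
\item \emph{No large pure box.} A pure box of relative size $\alpha$ in each part must, after averaging over the first-level blocks, give a fractional pure box of $H$ of weight close to $1$ at a positive fraction of levels. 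This should yield $\alpha^k\le \beta^{\Omega(r)}$, so taking $r$ large beats any fixed $\alpha$.
\end{itemize}

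\textbf{Main obstacle.} The main obstacle is the first property for $k\ge 3$. The boxes $\{a_i,b_i\}$ may split at different levels in different parts, so the induced pattern is a mixture of $H$-patterns and constant patterns, and the product must be defined so that no such mixture produces exactly the two diagonal tuples. I would first try the simplest rule: ``the edge is decided by $H$ on the coordinates at the least level where not all of $x^1,\dots,x^k$ share a common block label.'' I would then verify the forbidden patterns case by case.

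If this fails, the fallback is a probabilistic/additive construction: edges given by $\Sigma_i f_i(x_i)\in S$ with $S$ a union of many short arcs of $\mathbb{Z}_N$ at scales that grow geometrically. The additive identity still controls mixed tuples across complementary pairs. Then use Kneser/Plünnecke-type growth of the sumset $f_1(X_1)+\dots+f_k(X_k)$ of linear-size sets to show that it meets both $S$ and its complement.
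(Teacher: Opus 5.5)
There is a genuine gap: your main construction provably fails the second property. Take any product in which the status of $(x^1,\dots,x^k)$ is decided by $H$ at the first level where the labels are not all equal. Fix a non-constant $(c_1,\dots,c_k)\in[m]^k$ and let $X_i=\{x\in[m]^r : x_1=c_i\}$. Every tuple in $X_1\times\dots\times X_k$ already splits at level $1$, so its status is $H(c_1,\dots,c_k)$. This is a pure box with $|X_i|=m^{r-1}=n/m$ in every part, for every $r$.

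So the hoped-for bound $\alpha^k\le\beta^{\Omega(r)}$ is false; with your two-vertex seed there are even pure boxes of size $n/2$. No refinement of the ``averaging over first-level blocks'' argument can fix this, because substitution-type products always keep the linear pure boxes coming from the top level.

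The fallback is not a proof either. The identity $s(m)+s(m')=s(a)+s(b)$ excludes the forbidden patterns only because a half-line is monotone. Suppose $S$ has a gap: $s,\,s+g+1\in S$ and $s+1,\dots,s+g\notin S$ with $g\ge k-1$. Take any box with $\sum_i f_i(b_i)=s$ and differences $d_i=f_i(a_i)-f_i(b_i)\ge 1$ summing to $g+1$. Every proper nonempty partial sum of the $d_i$ lies in $[1,g]$, so this box is a $2$-matching. A union of well-separated short arcs has such gaps, and nothing in your sketch stops the $f_i$ from realizing these differences. The sumset growth you want in order to kill pure boxes pushes the other way.

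What you are missing is the one place where $k\ge 3$ genuinely helps: let the edge relation ignore a coordinate. If membership of $(x_1,\dots,x_k)$ does not depend on $x_k$, then on any box $\prod_i\{a_i,b_i\}$ the tuples $(a_1,\dots,a_{k-1},a_k)$ and $(a_1,\dots,a_{k-1},b_k)$ have the same status. Hence the box is neither a $2$-matching nor a $2$-co-matching; the paper phrases this as slice rank $1$ versus slice rank $2$.

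The paper's construction is as follows.
\begin{enumerate}
\item Take a random $k$-partite hypergraph with about $Cn$ edges, where $C>kH(\alpha)/\alpha^k$. It has no empty box of size $\alpha n$.
\item For each edge $(x_1,\dots,x_k)$, add every $(x_1,\dots,x_{k-1},y)$. Adding edges creates no empty box.
\item The result has $O(n^2)$ edges, so for $k\ge3$ it has no complete box of size $\alpha n$, and its adjacency is independent of $x_k$.
\end{enumerate}
Even more simply, lifting a random bipartite graph on $A_1\times A_2$ to $A_1\times\dots\times A_k$ does the job. For $k=2$ there is no free coordinate, which is consistent with $p(2,\ell)=1/2$.
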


Nonetheless, we have a positive result for sufficiently sparse hypergraphs, analogous to a lemma of Gavinsky and Lovett \cite{gavinskylovett2014} for matrices. The precise formulation of Theorem \ref{thm:general-hyper} also tells us $O(n^2)$ edges is necessary to force a linear empty $k$-partite subgraph. 

\begin{thm}\label{thm:sparse-hyper}
    Let $\alpha \in (0, 1]$, $k \ge 2$ be constants and let $t \ge 2$. Let $G$ be a $k$-uniform $k$-partite hypergraph with $|A_i| = n$ for all $i = 1, 2, \dots, k$, satisfying:
    \[|E| \le \frac{(1-\alpha)^2}{4(k-1)(t-1)}n^2.\]
    
    Suppose further that $G$ does not contain a matching of size $t$. Then $G$ contains an empty $k$-partite subgraph of size $\alpha n$. 
\end{thm}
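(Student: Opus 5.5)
The plan is to run a greedy procedure that builds an induced matching inside a shrinking ``available box'' $U_1 \times \dots \times U_k$ with $U_i \subseteq A_i$. We keep $|U_i| \ge \alpha n$ throughout. If at some moment the box contains no edge, it is an empty $k$-partite subgraph of size $\alpha n$ and we are done. Otherwise we pick an edge inside it and extend the matching. Since $G$ has no matching of size $t$, the procedure must stop with an empty box, provided we can afford $t-1$ extension steps without any $U_i$ dropping below $\alpha n$.

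\emph{Step 1 (degree truncation).} Fix a threshold $D>0$ and delete from each $A_i$ every vertex of degree greater than $D$. Each edge contributes one to the degree of exactly one vertex of $A_i$, so at most $|E|/D$ vertices are deleted from each part. Let $U_i$ be what remains of $A_i$. From now on, every vertex in the box has degree at most $D$ in $G$.

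\emph{Step 2 (greedy extension).} Suppose edges $e_1,\dots,e_j$ have been chosen and the current box contains an edge $e_{j+1}$. Choose it, and then remove from every $U_i$ all vertices that lie in a common edge with some vertex of $e_{j+1}$; this includes the vertices of $e_{j+1}$ itself. We claim $e_1,\dots,e_{j+1}$ then span an induced matching. Every later edge is chosen inside a box avoiding the neighbourhoods of the earlier chosen vertices. Hence no edge of $G$ can contain a vertex of $e_s$ together with a vertex of $e_{s'}$ for $s\neq s'$. Consequently every edge inside the box spanned by $\bigcup_s e_s$ lies within a single $e_s$, and since it takes exactly one vertex from each part, it equals $e_s$.

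For the cost of one step, fix a part $A_i$. A vertex of $A_i$ sharing an edge with $e_{j+1}$ shares it with one of the $k-1$ vertices of $e_{j+1}$ outside $A_i$. Each of those has degree at most $D$, so each step removes at most $(k-1)D$ vertices from $U_i$. The vertex $e_{j+1}\cap A_i$ is itself such a neighbour, so it needs no extra $+1$.

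\emph{Step 3 (accounting).} If the box still contained an edge after $t-1$ steps, a $t$-th step would produce an induced matching of size $t$, which is a contradiction. So the procedure stops after at most $t-1$ steps. It suffices that
\[ \frac{|E|}{D} + (t-1)(k-1)D \le (1-\alpha)n, \]
since then every $U_i$ keeps at least $\alpha n$ vertices at every stage. Choose $D=\sqrt{|E|/((k-1)(t-1))}$. Then the left-hand side equals $2\sqrt{(k-1)(t-1)|E|}$, and the condition becomes exactly
\[ |E|\le \frac{(1-\alpha)^2}{4(k-1)(t-1)}n^2, \]
which is the hypothesis. If $|E|=0$ there is nothing to prove, and rounding of $D$ is harmless because we only need degrees at most $D$.

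The main point to get right is Step 2: we need the removal rule that deletes all neighbours of the new edge's vertices. This rule must do two things. It must be strong enough to guarantee inducedness, which needs care for $k\ge 3$, since a cross edge could mix vertices from several chosen edges. It must also remove only $(k-1)D$ vertices per part per step, which is what makes the constant $4(k-1)(t-1)$ come out exactly.
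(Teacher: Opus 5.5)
Your proof is correct and follows the paper's argument: delete high-degree vertices, then greedily grow an induced matching, each time discarding every vertex that shares an edge with the newly chosen edge. The only difference is cosmetic: the paper fixes the threshold at $\frac{(1-\alpha)n}{2(k-1)(t-1)}$ and splits the $(1-\alpha)n$ budget into two equal halves, whereas you take $D=\sqrt{|E|/((k-1)(t-1))}$, which yields exactly the same condition on $|E|$.
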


\section{Quantitative diagonal bounds}

For ease of notation, we define the following. 

For $k, \ell \ge 2$, we define \purp{$p(k, \ell)$} to be the largest $\alpha \in [0,1]$ such that the following holds. For all bipartite $G$ which do not contain a matching of size $k$ or a co-matching of size $\ell$, there exists a pure pair $(X, Y)$ with $X \subseteq A$, $Y \subseteq B$ such that:
\[|X| \ge \alpha |A| \text{ and } |Y| \ge \alpha |B|.\]
In the diagonal case, we also define $\purp{p(t)}$ to be $p(t, t)$. 

Let $k \ge 2$. Given a $k$-uniform, $k$-partite hypergraph $G$, define $\purp{\tau(G)}$ to be the size of the largest induced matching or induced co-matching in $G$. 

We turn to proving Theorem \ref{thm:graph-lb}. 

\begin{proof}[Proof of Theorem \ref{thm:graph-lb}]
    We prove a stronger bound on $|X|, |Y|$. In particular, consider the two-parameter sequence $s_{k, \ell}$ for $k, \ell \ge 2$, defined by the recurrence relation: $s_{2, \ell} = s_{k, 2} = 2$, and for $k, \ell \ge 3$, 
    \[s_{k, \ell} = \frac{(s_{k-1, \ell} + s_{k, \ell-1}) + 2 + \sqrt{(s_{k-1, \ell} + s_{k, \ell-1})^2 + 4}}{2}.\]
    
    It suffices to prove:
    \[p(k, \ell) \ge \frac{1}{s_{k, \ell}}.\]
    
    Indeed, for $w \ge 4$, we have:
    \[w+1 < \frac{w + 2 + \sqrt{w^2 + 4}}{2} \le w + \frac54.\]
    
    Thus, $s_{k, \ell} \le s_{k-1, \ell} + s_{k, \ell-1} + \frac54$. Setting $x_{k, \ell} := s_{k, \ell} + \frac54$ and comparing to binomial coefficients, we obtain:
    \[s_{k, \ell} \le \frac{13}{4} {k+\ell-4 \choose k-2} - \frac{5}{4}.\]
    
    By Stirling's approximation, in the diagonal case this indeed yields:
    \[s_{k, \ell} \le \left(\frac{13}{64} + o(1)\right) \frac{4^t}{\sqrt{t\pi}}.\]

    We proceed by induction on $k + \ell$. Let $A = [m]$, $B = [n]$. \\

    \emph{Base case:} suppose $k = 2$ or $\ell = 2$ (which includes $k + \ell \le 5$). Note that an induced matching of size 2 is isomorphic to a co-matching of size 2, hence either induced condition is equivalent to $G$ not containing an induced copy of $e_1 \sqcup e_2$ where $e_i$ are vertex-disjoint edges. Thus, given any $x, x' \in A$, we cannot have that both $N(x)\setminus N(x')$ and $N(x') \setminus N(x)$ are non-empty. Hence, either $N(x) \subseteq N(x')$ or $N(x') \subseteq N(x)$, and similarly for $y, y' \in B$. 

    Order the vertices in $A$ as $x_1, \dots, x_m$ such that $N(x_i) \subseteq N(x_{i+1})$ for $i = 1, 2, \dots, m-1$. Identically we obtain $y_1, \dots, y_n$. Note that if $x_i \sim y_j$, then for any $i' \ge i$, $j' \ge j$, we have $y_j \in N(x_{i'})$ hence $x_{i'} \in N(y_j) \subseteq N(y_{j'})$ and $x_{i'} \sim y_{j'}$. 

    If $x_{\ceil{m/2}} \sim y_{\ceil{n/2}}$, then $G[\{\ceil{m/2}, \dots, m\}, \{\ceil{n/2}, \dots, n\}]$ is complete. Otherwise, $x_{\ceil{m/2}} \not \sim y_{\ceil{n/2}}$ and $G[\{1, \dots, \ceil{m/2}\}, \{1, \dots, \ceil{n/2}\}]$ is empty. This gives us:
    \[|X| \ge \frac{m}{2} \text{ and } |Y| \ge \frac{n}{2}.\]

    \emph{Inductive step:} let $k + \ell \ge 6$ and suppose for the sake of contradiction that $G$ does not contain a matching of size $k$, a co-matching of size $\ell$ and a pure pair $(X, Y)$ with:
    \[|X| \ge \frac{1}{s_{k, \ell}}|A| \text{ and } |Y| \ge \frac{1}{s_{k, \ell}} |B|.\]

    For ease of notation, define:
    \[\lambda := \frac{s_{k-1, \ell}}{s_{k, \ell}}, \quad \mu := \frac{s_{k, \ell-1}}{s_{k, \ell}}.\]
    
    Pick any $\alpha, \beta \in [\mu, 1 - \lambda]$. Since $s_{k,\ell} > s_{k-1, \ell} + s_{k, \ell-1}$, this interval is non-empty. Partition $A = A_\alpha^+ \sqcup A_\alpha^-$ where:
    \[A_\alpha^+ = \{x \in A : |N(x)| \ge \alpha n\}.\]
    Similarly, $B = B_\beta^+ \sqcup B_\beta^-$ with $B_\beta^+ = \{y \in B : |N(y)| \ge \beta m\}$. 

    Suppose $x \sim y$ with $x \in A_\alpha^-$, $y \in B_\beta^-$, and let $H = G[A\setminus N(y), B \setminus N(x)]$. Note that:
    \[|A \setminus N(y)| > m - \beta m \ge \lambda m,\]
    and similarly $|B \setminus N(x)| > (1-\alpha)n \ge \lambda n$. If $H$ contains a matching of size $k-1$, say $H[S, T]$, then $G[S \cup \{x\}, T \cup \{y\}]$ is an induced matching of size $k$, contradiction. Also, $H$ cannot contain a co-matching of size $\ell$. Thus, by the inductive hypothesis, there exists $X' \subseteq A \setminus N(y) \subseteq A$ and $Y' \subseteq B \setminus N(x) \subseteq B$ such that:
    \[|X'| \ge \frac{1}{s_{k-1, \ell}} \cdot |A \setminus N(y)| \ge \frac{1}{s_{k, \ell}}m,\]
    \[|Y'| \ge \frac{1}{s_{k-1, \ell}} \cdot |B \setminus N(x)| \ge \frac{1}{s_{k, \ell}}n,\]
    and $G[X', Y'] = H[X', Y']$ is complete or empty. This yields a contradiction. \\

    Otherwise, for all $x \in A_\alpha^-$, $N(x) \subseteq B_\beta^+$. We identically repeat for all $y \in B_\beta^-$. Thus, all of $A_\alpha^-$ and $B_\beta^-$ are non-neighbours and $G[A_\alpha^-, B_\beta^-]$ is empty. 

    Similarly, suppose $x \not \sim y$ for $x \in A_\alpha^+$, $y \in B_\beta^+$ and let $H = G[N(y), N(x)]$. $H$ cannot contain a co-matching of size $\ell - 1$ or a matching of size $k$. Since $|N(y)| \ge \beta m \ge \mu m$ and $|N(x)| \ge \alpha n \ge \mu n$, we can conclude as above with the inductive hypothesis. Otherwise, for all $x \in A_\alpha^+$, $B' \setminus N(x) \subseteq B_\beta^-$, i.e. all of $A_\alpha^+$ and $B_\beta^+$ are neighbours and $G[A_\alpha^+, B_\beta^+]$ is complete, contradiction. 

    Thus, we arrive at a contradiction if:
    \[|A_\alpha^+| \ge \frac{1}{s_{k, \ell}} m, |B_\beta^+| \ge \frac{1}{s_{k, \ell}}n \text{ or } |A_\alpha^-| \ge \frac{1}{s_{k, \ell}} m, |B_\beta^-| \ge \frac{1}{s_{k, \ell}}m.\]
    
    Hence, for all values of $\alpha, \beta \in [\mu, 1-\lambda]$, we have both:
    \[\left(|A_\alpha^+| < \frac{1}{s_{k,\ell}}m \text{ or } |B_\beta^+| < \frac{1}{s_{k, \ell}}n\right) \text{ and } \left(|A_\alpha^-| < \frac{1}{s_{k, \ell}}m \text{ or } |B_\beta^-| < \frac{1}{s_{k, \ell}}n \right).\]

    First suppose $\alpha = \beta = 1 - \lambda$, thus without loss of generality, we can suppose $|A_{1  - \lambda}^-| \le \frac{1}{s_{k, \ell}}m$ (here $A, B$ play symmetric roles). Thus,
    \[|A_{1  - \lambda}^+| \ge \left(1 - \frac{1}{s_{k, \ell}}\right)m > \frac{1}{2}m > \frac{1}{s_{k, \ell}}m,\]
    hence for all $\beta \in [\mu, 1-\lambda]$, $|B_\beta^+| \le \frac{1}{s_{k, \ell}}n$ which gives:
    \[|B_\beta^-| > \left(1 - \frac{1}{s_{k, \ell}}\right)n.\]
    
    Take $\beta = \mu$. We double-count edges:
    \begin{align*}
        |A_{1-\lambda}^+| \cdot \underbrace{(1 -\lambda)n}_{\mathclap{\substack{\text{lower bound on}\\\text{degree in $A_{1-\lambda}^+$}}}} + (m - |A_{1-\lambda}^+|) \cdot \underbrace{0}_{\mathclap{\substack{\text{lower bound on}\\\text{degree in $A_{1-\lambda}^-$}}}} \le &|E| \le |B_\mu^-| \cdot \underbrace{\mu m}_{\mathclap{\substack{\text{upper bound on}\\\text{degree on $B_\mu^-$}}}} + (n - |B_\mu^-|) \cdot \underbrace{m}_{\mathclap{\substack{\text{upper bound on}\\\text{degree on $B_\mu^+$}}}} \\
        \Rightarrow |A_{1-\lambda}^+| \cdot (1-\lambda) n \le &|E| \le nm - |B_\mu^-| \cdot (1 - \mu)m.
    \end{align*}
    
    Hence with the above bounds:
    \[\left(1 - \frac{1}{s_{k, \ell}}\right)m \cdot (1 - \lambda)n < nm - \left(1 - \frac{1}{s_{k, \ell}}\right)n \cdot (1-\mu)m,\]
    which gives:
    \[\left(1-\frac{1}{s_{k, \ell}}\right) (2-\lambda - \mu) < 1.\]
    
    Yet $s_{k, \ell}$ is chosen to be the larger solution of:
    \[\left(1 - \frac{1}{s}\right) \left(2 - \frac{s_{k-1,\ell} + s_{k, \ell-1}}{s}\right) = 1,\]
    giving a contradiction. 
\end{proof}

\subsection{Upper bound construction}

For $p \in [0,1]$, let $G(m, n, p)$ denote an Erd\H{o}s-Rényi random bipartite graph: pick edges between parts of size $m, n$ independently, uniformly with probability $p$. We alter such a graph to provide a construction close to Theorem \ref{thm:graph-lb}. 

\begin{proof}[Proof of Theorem \ref{thm:graph-ub}]
    Consider $G' \sim G(L, L, 1/2)$ for $L = (1 - \frac{2\log t}{t}) \sqrt{\frac{2}{e}} \sqrt{t} 2^{t/2}$. For each $X \subseteq A$, $|Y| \subseteq B$ with $|X| = |Y| = t$, define the event:
    \[A_{X, Y} = \{G'[X, Y] \text{ is a matching, co-matching, complete or empty}\}.\]
    
    Let $\mathcal{A}$ denote the collection of all such events. We have:
    \[\mathbb{P}(A_{X, Y}) = 2 \cdot t! \frac{1}{2^{t^2}} + 2\cdot \frac{1}{2^{t^2}} = 2(t! + 1)\frac{1}{2^{t^2}}.\]
    
    Construct a dependency graph $\mathcal{G}$ on $\mathcal{A}$ by having:
    \[A_{X, Y} \sim A_{X', Y'} \iff |X \cap X'| \ge 1 \text{ and } |Y \cap Y'| \ge 1.\]
    
    Indeed, if $|X \cap X'| = 0$, then no pair $(x, y)$ lies in both $X \times Y$ and $Y \times Y'$, hence the edges/non-edges in $G'[X, Y]$ and $G'[X', Y']$ are independent. Note that $\mathcal{G}$ is regular with degree at most:
    \[\Delta \le {t \choose 1}{L \choose t-1} \cdot {t \choose 1}{L \choose t-1} \le t^2 \left(\frac{L^{t-1}}{(t-1)!}\right)^2 = t^4\frac{L^{2t-2}}{(t!)^2}.\]
    
    Since $t! \ge t^t/e^{t-1}$, we have:
    \[e\prob{A_{X, Y}}(\Delta + 1) \le 3et^4 \frac{L^{2t-2}}{t! 2^{t^2}} \le 3t^4 \left(\frac{eL^{2 - 2/t}}{t2^t}\right)^t \to 0.\]
    
    Thus, by the Lovász Local Lemma, there exists $G'$ for which no $A_{X,Y}$ hold, i.e. $\tau(G') \le t-1$ and $G'$ as no pure pair of size $t$. 

    Now let $G$ denote the equitable blow-up of $G'$ to $|A| = m$, $|B| = n$ (namely within $A$ and $B$ separately, vertex blow-ups are as equal in size as possible). No two vertices in an induced matching or co-matching have the same neighbourhood, hence must lie in distinct vertex blow-ups. Thus, $\tau(G) = \tau(G') \le t-1$. 

    Furthermore, the largest pure pair in $G$ arises from the blow-up of a pure pair in $G'$. Indeed, if $x, x'$ lie in the same vertex blow-up in $A$, they share the same neighbourhood hence for any pure pair $(X, Y)$ with $x \in X$, $(X \cup \{x'\}, Y)$ is still a pure pair. In particular, for all $X \subseteq A$, $Y \subseteq B$ with $|X| \ge \ceil{\frac{m}{L}} t$ and $|Y| \ge \ceil{\frac{n}{L}} t$, we have that $(X, Y)$ cannot be a pure pair. 
\end{proof}

We can rephrase Theorem \ref{thm:graph-lb}, Corollary \ref{cor:diagonal-lb} and Theorem \ref{thm:graph-ub} in terms of $p(k, \ell)$. 

\begin{cor}\label{cor:p-diag}
    For $k, \ell \ge 2$, we have:
    \[p(k, \ell) \ge \frac{4}{13 {k+\ell-4 \choose k-2} - 5}.\]
    
    For $t \ge 2$, we have:
    \[\left(\frac{64\sqrt{\pi}}{13} + o(1)\right) \frac{\sqrt{t}}{4^t} \le p(t) \le \left(\sqrt{\frac{e}{2}} + o(1)\right) \frac{\sqrt{t}}{2^{t/2}}.\]
\end{cor}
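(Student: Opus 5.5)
This corollary is essentially a restatement, so the plan is to translate each earlier result into the language of $p(k,\ell)$ and $p(t)=p(t,t)$, and to check that the constructions meet the definition.

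\textbf{Lower bound for $p(k,\ell)$.} By definition, $p(k,\ell)$ is the largest $\alpha$ such that every bipartite $G$ with no matching of size $k$ and no co-matching of size $\ell$ has a pure pair with $|X|\ge \alpha|A|$ and $|Y|\ge\alpha|B|$. Theorem \ref{thm:graph-lb} says that the value
\[
\alpha_0=\frac{4}{13\binom{k+\ell-4}{k-2}-5}
\]
has this property for every such $G$. Hence $p(k,\ell)\ge\alpha_0$. One point needs a check: the property is monotone in $\alpha$, and the set of admissible $\alpha$ is closed, so the largest such $\alpha$ exists. Closedness holds because, for each fixed graph, the set of admissible $\alpha$ is closed (there are finitely many pure pairs), and an intersection of closed sets is closed. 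So $p$ is well defined and at least $\alpha_0$.

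\textbf{Lower bound for $p(t)$.} Put $k=\ell=t$ in the first bound. This gives $p(t)\ge 4/(13\binom{2t-4}{t-2}-5)$, which is exactly Corollary \ref{cor:diagonal-lb}. The asymptotic form comes from Stirling's formula,
\[
\binom{2t-4}{t-2}\sim \frac{4^{t-2}}{\sqrt{\pi t}},
\]
which yields $\bigl(\tfrac{64\sqrt\pi}{13}+o(1)\bigr)\sqrt t/4^t$.

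\textbf{Upper bound for $p(t)$.} This is the only step needing care, and I expect the main obstacle to be the bookkeeping of $o(1)$ terms and ceilings. Fix $\eps>0$ and set
\[
\beta=\Bigl(\sqrt{e/2}+\eps\Bigr)\frac{\sqrt t}{2^{t/2}}.
\]
Take $m=n$ large, and let $G$ be the graph supplied by Theorem \ref{thm:graph-ub}.

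\begin{itemize}
\item $G$ contains no matching or co-matching of size $t$.
\item $G$ has no pure pair with both sides of size at least $\beta|A|$ and $\beta|B|$.
\end{itemize}

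Unpacking the construction, the blow-up has no pure pair with $|X|\ge\lceil m/L\rceil t$. Dividing by $m$, this threshold tends to $t/L=\bigl(\sqrt{e/2}+o(1)\bigr)\sqrt t/2^{t/2}$ as $m\to\infty$.

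Now suppose $\alpha$ were admissible for every graph in the class. Applying this to $G$ produces a pure pair of size at least $\alpha m$ on each side, so $\alpha<\beta$. Since $\eps>0$ was arbitrary, and the $o(1)$ is understood as $t\to\infty$, we obtain
\[
p(t)\le\Bigl(\sqrt{e/2}+o(1)\Bigr)\frac{\sqrt t}{2^{t/2}}.
\]
Combining the two bounds on $p(t)$ completes the corollary.
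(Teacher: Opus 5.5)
Your proposal is correct and takes the same approach as the paper, which justifies the corollary only by noting that Theorem~\ref{thm:graph-lb}, Corollary~\ref{cor:diagonal-lb} and Theorem~\ref{thm:graph-ub} rephrase directly in terms of $p(k,\ell)$; your Stirling computation and the blow-up threshold $\lceil m/L\rceil t/m \to t/L$ are the same ingredients. Your additional checks are refinements of that argument rather than a different route: you verify that the maximum defining $p(k,\ell)$ is attained, and you note that for fixed $\varepsilon>0$ one needs $t$ large enough for both the Local Lemma and $t/L<\beta$ to hold.
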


\section{Off-diagonal bounds}

For the general off-diagonal scenario, we have for fixed $k$ that $s_{k, \ell} = O_k(\ell^{k-2})$.

\begin{cor}\label{cor:fixed-k}
    For fixed $k$, 
    \[p(k, \ell) = \Omega_k(\ell^{-(k-2)}).\]
\end{cor}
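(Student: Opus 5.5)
We derive this directly from Theorem \ref{thm:graph-lb}, or equivalently from the first bound in Corollary \ref{cor:p-diag}:
\[p(k, \ell) \ge \frac{4}{13 {k+\ell-4 \choose k-2} - 5}.\]
With $k$ fixed, it is enough to show that the denominator is $O_k(\ell^{k-2})$.

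The key step is a crude estimate of the binomial coefficient with $k$ fixed and $\ell \to \infty$. We have
\[{k+\ell-4 \choose k-2} = \frac{(\ell-2+1)(\ell-2+2)\cdots(\ell-2+(k-2))}{(k-2)!} \le \frac{(\ell+k)^{k-2}}{(k-2)!}.\]
For $\ell \ge k$ this is at most $\frac{2^{k-2}}{(k-2)!}\ell^{k-2}$. Hence
\[13{k+\ell-4 \choose k-2} - 5 \le C_k \ell^{k-2}\]
with $C_k = 13\cdot 2^{k-2}/(k-2)!$, valid for all $\ell \ge k$.

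For the finitely many $\ell$ with $2 \le \ell < k$, the bound from Corollary \ref{cor:p-diag} is a fixed positive number depending only on $k$. These values can therefore be absorbed by enlarging the implied constant. Note that $13\binom{k+\ell-4}{k-2} - 5 \ge 8 > 0$, so the bound is positive and meaningful. When $k=2$ the claim is simply $p(2,\ell) \ge 1/2$, which is $\Omega(1)$, consistent with the base case of Theorem \ref{thm:graph-lb}.

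Combining these gives $p(k,\ell) \ge \frac{4}{C_k}\ell^{-(k-2)}$ for large $\ell$, which is $\Omega_k(\ell^{-(k-2)})$. There is no real obstacle here. The only point needing care is that the implied constant must depend only on $k$ and be uniform in $\ell \ge 2$, and the two-range argument above handles exactly that. One could instead appeal to the remark preceding the corollary that $s_{k,\ell} = O_k(\ell^{k-2})$, since the proof of Theorem \ref{thm:graph-lb} gives $p(k,\ell) \ge 1/s_{k,\ell}$. That route amounts to the same computation.
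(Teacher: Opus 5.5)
Your proposal is correct and follows the paper's argument. The paper just remarks that the bound $s_{k,\ell} \le \frac{13}{4}{k+\ell-4 \choose k-2} - \frac{5}{4}$ from the proof of Theorem~\ref{thm:graph-lb} is $O_k(\ell^{k-2})$ for fixed $k$; your explicit estimate of the binomial coefficient, with the finitely many $\ell<k$ absorbed into the constant, is a careful write-up of that remark.
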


Note that in the case of $k = 2$, we demonstrated in the \emph{Base case} of the proof of Theorem \ref{thm:graph-lb} that there exists a pure pair $(X, Y)$ with:
\[|X| \ge \frac12 |A| \text{ and } |Y| \ge \frac12 |B|.\]

A matching construction with $|A| = |B| = 2$ is the half-graph with bipartite adjacency matrix:
\[\begin{pmatrix}
    1 & 1 \\
    0 & 1
\end{pmatrix}.\]

Equitable blow-ups of this, as in the proof of Theorem \ref{thm:graph-ub}, demonstrate equality for arbitrarily large class sizes. 

\begin{cor}\label{cor:k=2}
    For $\ell \ge 2$, we have:
    \[p(2, \ell) = \frac12.\]
\end{cor}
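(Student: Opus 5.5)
The statement has two halves, a lower bound $p(2,\ell) \ge \frac12$ and an upper bound $p(2,\ell) \le \frac12$, and I will prove them separately.

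\emph{Lower bound.} This is the base case in the proof of Theorem~\ref{thm:graph-lb}. A matching of size $2$ is its own bicomplement, so forbidding it already forces the neighbourhoods in $A$ to be nested, and likewise in $B$. Ordering both sides by inclusion of neighbourhoods, we test the pair of middle vertices $x_{\lceil m/2\rceil}$ and $y_{\lceil n/2\rceil}$. If they are adjacent, the top halves form a complete pair; if not, the bottom halves form an empty pair. Either way we get $|X| \ge m/2$ and $|Y| \ge n/2$. The co-matching condition is not used here, since excluding an induced matching of size $2$ is already enough. Hence $p(2,\ell) \ge \frac12$ for every $\ell \ge 2$.

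\emph{Upper bound.} Let $H$ be the half-graph on $A'=\{a_1,a_2\}$, $B'=\{b_1,b_2\}$ with $a_1\sim b_1$, $a_1 \sim b_2$, $a_2\sim b_2$ and $a_2 \not\sim b_1$. Its neighbourhoods are nested, so it contains no induced matching of size $2$. It also contains no co-matching of size $\ell \ge 2$: for $\ell \ge 3$ this is trivial since $H$ is too small, and for $\ell=2$ a co-matching of size $2$ is a matching of size $2$.

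Take the equitable blow-up $G$ of $H$ to part sizes $m,n$, exactly as in the proof of Theorem~\ref{thm:graph-ub}. Vertices in an induced matching or co-matching have distinct neighbourhoods, so they come from distinct blobs. Therefore $G$ contains no matching or co-matching of size $\ge 2$ beyond what $H$ has, which is none.

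It remains to show that $G$ has no pure pair with both sides strictly larger than half. Suppose $(X,Y)$ is pure with $|X| > \lceil m/2\rceil$ and $|Y| > \lceil n/2 \rceil$, or simply $|X| > m/2$ and $|Y| > n/2$ when $m,n$ are even. Then $X$ meets both blobs of $A'$ and $Y$ meets both blobs of $B'$. So $(X,Y)$ contains a vertex from each of $a_1,a_2,b_1,b_2$, giving both the edge $a_1b_1$ and the non-edge $a_2b_1$, which contradicts purity.

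Thus for even $m,n$ the best achievable constant is exactly $\frac12$, and so $p(2,\ell)\le \frac12$. The only delicate point is rounding. For odd sizes the equitable blow-up permits pure pairs of size $\lceil m/2\rceil$, which is slightly more than $m/2$. Since $p$ is defined as a supremum valid for all $G$, it suffices to exhibit the even-sized examples: there, any $\alpha > \frac12$ fails.
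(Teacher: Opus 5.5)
Your proposal is correct and follows the paper's proof. The lower bound is exactly the base case of the proof of Theorem~\ref{thm:graph-lb}, and the upper bound comes from equitable blow-ups of the $2\times 2$ half-graph; your checks on induced matchings and co-matchings, and your parity remark, just make explicit what the paper leaves implicit (note that $H$ itself, with $m=n=2$, already witnesses $p(2,\ell)\le\frac12$, so the blow-ups only serve to show tightness for arbitrarily large graphs).
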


To deal with $k = 3$, we refer back to the recurrence.

\begin{proof}[Proof of Theorem \ref{cor:k=3-lb}]
    Let $a_\ell = s_{3, \ell}$. We have that $a_2 = 2$ and:
    \[\left(1 - \frac{1}{a_\ell}\right) \left(2 - \frac{a_{\ell - 1} + 2}{a_\ell}\right) = 1,\]
    which rearranges to give:
    \[a_\ell - a_{\ell - 1} = 3 + \frac{1}{a_\ell - 1}.\]
    In particular, $a_\ell$ is increasing and $a_2 = 2$, hence $a_\ell - a_{\ell - 1} \ge 3$. This gives $a_\ell \ge 3\ell-4$. 

    Now consider $x_\ell = a_\ell - 3\ell$. Then:
    \[x_\ell -x_{\ell - 1} \le \frac{1}{3\ell - 5}.\]
    
    Thus, we obtain by telescoping:
    \[x_n = x_2 + \sum_{\ell = 3}^n (x_\ell - x_{\ell - 1}) \le -4 + \sum_{\ell = 3}^n \frac{1}{3\ell - 5}.\]
    
    We can place an upper bound on the sum via an integral:
    \[x_n \le -4 + \int_2^n \frac{1}{3x-5} dx = -4 + \frac13 \log(3n-5).\]
    
    In particular, we get $a_\ell \le 3\ell + \frac13\log \ell - 3$. 
\end{proof}

\subsection{Upper bound construction}

We also provide a construction exhibiting polynomial growth but with a different exponent, with its roots in finite geometry. \\

Let a \emph{generalised quadrangle} be an incidence structure $\mathcal{Q} = (P, \mathcal{L}, \text{I})$, where $P$ is a set of points, $\mathcal{L}$ is a set of lines and $I \subseteq P \times \mathcal{L}$ is an incidence relation which satisfies the following conditions:
\begin{outline} 
    \1[(i)] any two distinct points are incident with at most one common line,
    \1[(ii)] for every point $x \in P$ and every line $l \in \mathcal{L}$ not incident with $x$, there is a unique line $m \in \mathcal{L}$ incident with $x$ such that $m$ \emph{meets} $l$ (meaning they are incident with a common point $y \in P$),
    \1[(iii)] every point is incident with at least three lines and conversely every line is incident with at least three points.
\end{outline}
Say $x, y \in P$ are \emph{collinear} if they lie on a common line from $\mathcal{L}$. Then condition (ii) can be rephrased as: there exists a unique point $y$ incident with $l$ such that $x$ is collinear with $y$. 

A generalised quadrangle has \emph{order} $(s, t)$ if every line is incident with exactly $s+1$ points and every point is incident with exactly $t+1$ lines. Its incidence graph is the bipartite graph with vertex classes $P$ and $\mathcal{L}$, where a point is adjacent exactly to the lines incident with it. Note that the conditions force this graph to have girth at least eight. Indeed, a four-cycle would give two distinct lines through the same two points, contradicting (i), and a six-cycle contradicts uniqueness within (ii). 

Let $q$ be a prime power and let $V = \mathbb{F}_q^4$ be equipped with a non-degenerate alternating bilinear form $\braket{\cdot, \cdot}: V \times V \to \mathbb{F}_q$. The \emph{symplectic generalised quadrangle} $W(3, q)$ is the incidence structure, with associated projective space $\text{PG}(3, q)$, constructed as follows. Its points are the one-dimensional subspaces of $V$, its lines are two-dimensional totally isotropic subspaces of $V$ and incidence is given by containment. Thus, a two-dimensional subspace $L \le V$ is a line of $W(3, q)$ precisely when $\braket{x, y} = 0$ for all $x, y \in L$. 

The incidence structure $W(3, q)$ is a generalised quadrangle of order $(q, q)$. We quote standard facts about $W(3, q)$ from Payne and Thas \cite{payne2009}. 

\begin{lmm}\label{lmm:w3q}
    Let $q$ be a prime power. There exists a bipartite graph $H_q = ((P, \mathcal{L}), E)$ such that:
    \begin{outline}
        \1 $|P| = |\mathcal{L}| = N_q = (q+1)(q^2 + 1)$,
        \1 $H_q$ is $(q+1)$-regular and has girth at least eight,
        \1 for all $X \subseteq P$, $Y \subseteq \mathcal{L}$, we have:
        \[\left|e_{H_q}(X, Y) - \frac{q+1}{N_q}|X||Y| \right| \le \sqrt{2q}\sqrt{|X||Y|}. \tag{$\star$}\]
    \end{outline}
\end{lmm}
\begin{proof}
    Let $H_q$ denote the incidence graph of $W(3, q)$, then the first two assertions are standard. 

    Let $M$ be its point-line incidence matrix and let $A$ be the adjacency matrix of the corresponding collinearity graph. Two distinct points lie on at most one common line hence $MM^T = (q+1)I + A$. We have that the collinearity graph is strongly regular with parameters:
    \[(n, r, e, f) = ((q + 1)(q^2 + 1), q(q+1), q-1, q+1).\]
    
    Thus, we obtain eigenvalues $q(q+1)$, $q-1$ and $-1-q$. It follows that the singular values of $M$ are $q+1$, $\sqrt{2q}$ and 0. 

    The third assertion now follows from the biregular variant of the expander mixing lemma. 
\end{proof}

For ease of notation, we define:
\[L_q := \floor{(q^2+1)(\sqrt{2q} + 1)} + 1, \quad \eta_q := \frac{\sqrt{2q}}{q+1}.\]

First we find an infinite family of graphs that saturate Theorem \ref{thm:k=3-ub}.

\begin{thm}\label{thm:w3q-complement}
    Let $q$ be a prime power. There exists a bipartite graph $G_q$ with $|A| = |B| = N_q$, such that:
    \begin{outline}
        \1 $G_q$ does not contain a matching of size 3 or a co-matching of size $L_q$,
        \1 $G_q$ has no pure pairs $(X, Y)$ with $X \subseteq A$, $Y \subseteq B$ such that:
        \[|X| > \eta_q |A| \text{ and } |Y| > \eta_q|B|.\]
    \end{outline}
\end{thm}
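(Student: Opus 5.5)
We take $G_q$ to be the bicomplement of the graph $H_q$ from Lemma \ref{lmm:w3q}, with $A = P$ and $B = \mathcal{L}$, so that $|A| = |B| = N_q$. Taking bicomplements swaps matchings with co-matchings and complete pairs with empty pairs. So it suffices to check three things about $H_q$: it has no induced co-matching of size 3, it has no induced matching of size $L_q$, and it has no complete or empty pair $(X, Y)$ with $|X|, |Y| > \eta_q N_q$. The first and the complete case will come from the girth bound. The other two will come from the mixing bound $(\star)$.

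\emph{No co-matching of size 3 in $H_q$.} Suppose $x_1, x_2, x_3$ and $y_1, y_2, y_3$ induce a co-matching, so $x_i \sim y_j$ exactly when $i \ne j$. Then $x_1 y_2 x_3 y_1 x_2 y_3 x_1$ is a closed walk on six distinct vertices, hence a $6$-cycle. This contradicts girth at least eight. Therefore $G_q$ has no matching of size 3.

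\emph{No induced matching of size $L_q$ in $H_q$.} Let $X, Y$ with $|X| = |Y| = L$ induce a matching, so $e_{H_q}(X, Y) = L$. Applying $(\star)$ gives $\frac{q+1}{N_q} L^2 - L \le \sqrt{2q}\, L$. Hence
\[ L \le \frac{N_q(1 + \sqrt{2q})}{q+1} = (q^2+1)(\sqrt{2q}+1) < L_q. \]
So $H_q$ has no induced matching of size $L_q$, and $G_q$ has no co-matching of size $L_q$. The only point needing care is that the floor in the definition of $L_q$, plus one, makes this inequality strict.

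\emph{No large pure pairs.} Take $|X|, |Y| > \eta_q N_q$.
\begin{itemize}
\item If $H_q[X, Y]$ were complete, then, since $\eta_q N_q = \sqrt{2q}(q^2+1) \ge 2$, we could pick two vertices on each side. These span a $4$-cycle, contradicting the girth.
\item If $H_q[X, Y]$ were empty, then $e_{H_q}(X, Y) = 0$, and $(\star)$ gives $\frac{q+1}{N_q}|X||Y| \le \sqrt{2q}\sqrt{|X||Y|}$. Rearranging, $\sqrt{|X||Y|} \le \eta_q N_q$, which contradicts $|X|, |Y| > \eta_q N_q$.
\end{itemize}

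I do not expect a real obstacle: each step is a direct application of the girth bound or of $(\star)$. The points to verify carefully are the strictness at the threshold $L_q$ and the constant in $(\star)$, which comes from the quoted expander mixing lemma.
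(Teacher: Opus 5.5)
Your proposal is correct and follows essentially the same route as the paper. Both take $G_q$ to be the bicomplement of the incidence graph $H_q$ of $W(3,q)$, rule out the induced matching of size 3 and complete pairs using girth at least eight, and rule out large co-matchings and empty pairs using the mixing bound $(\star)$. Your explicit 6-cycle $x_1y_2x_3y_1x_2y_3$ and the check $\eta_q N_q = \sqrt{2q}(q^2+1) \ge 2$ simply spell out what the paper states as ``forms a six-cycle'' and ``$\min\{|X|,|Y|\}\le 1$''.
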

\begin{proof}
    Let $H_q$ be given by Lemma \ref{lmm:w3q}, and let $G_q$ be its bicomplement. 

    If $G_q$ contained an induced matching of size 3, then the corresponding induced subgraph of $H_q$ on the same six vertices forms a six-cycle, which contradicts the second property of $H_q$. 

    Suppose that $G_q$ contains a co-matching of size $s$ given by $X \subseteq P$, $Y \subseteq \mathcal{L}$. Then $H_q[X, Y]$ is an induced matching of size $s$. By the third property in Lemma \ref{lmm:w3q}, 
    \[\left|s - \frac{q+1}{N_q}s^2\right| \le \sqrt{2q} \cdot s,\]
    hence:
    \[s \le \frac{N_q}{q+1}(1 + \sqrt{2q}) = (q^2+1)(1 + \sqrt{2q}) < L_q.\]

    Finally, consider a pure pair $(X, Y)$ in $G_q$. Then $H_q[X, Y]$ is complete or empty. 

    \emph{Case 1:} If $H_q[X, Y]$ is complete, note that $H_q$ has no $K_{2, 2}$ subgraph hence $\min\{|X|, |Y|\} \le 1$.

    \emph{Case 2:} If $H_q[X, Y]$ is empty, then using the third property again gives:
    \[\frac{q+1}{N_q} |X||Y| \le \sqrt{2q} \sqrt{|X||Y|}.\]
    Thus, $\sqrt{|X| |Y|} \le \eta_q N_q$, giving $\min\{|X|, |Y|\} \le \eta_qN_q$. 
\end{proof}

Now we interpolate among this family.

\begin{proof}[Proof of Theorem \ref{thm:k=3-ub}]
    Let $x = \left(\frac{\ell}{\sqrt{2}}\right)^{2/5}$. Choose $q$ to be the largest prime such that:
    \[q < x\left(1 - \frac{1}{\log x}\right).\]
    
    By the prime number theorem, we have $q = (1+o(1))x$. Note that:
    \[L_q = \sqrt{2}q^5 (1 + O(q^{-1/2})).\]
    Thus, we have $L_q \le \ell$ for sufficiently large $\ell$. 

    Consider $G_q$ given by Theorem \ref{thm:w3q-complement}. It does not contain a matching of size 3 or a co-matching of size $L_q$, hence certainly not one of size $\ell$. Furthermore, it has no pure pairs $(X, Y)$ with proportion at least:
    \[\eta_q = \sqrt{2} q^{-1/2} (1 + O(q^{-1})) = \sqrt{2} \left((1+o(1)) \left(\frac{\ell}{\sqrt{2}}\right)^{2/5}\right)^{-1/2} = \frac{2^{3/5} + o(1)}{\ell^{1/5}}.\]

    Taking equitable blow-ups gives arbitrarily large graphs. 
\end{proof}

We combine Corollary \ref{cor:k=3-lb} and Theorem \ref{thm:k=3-ub} into bounds on $p(3, \ell)$. 

\begin{cor}\label{cor:k=3}
    For $\ell \ge 2$, we have:
    \[\frac{1}{3\ell + \frac13 \log \ell - 3} \le p(3, \ell) \le \frac{2^{3/5} + o(1)}{\ell^{1/5}}.\]
\end{cor}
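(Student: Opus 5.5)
Corollary \ref{cor:k=3} simply combines two results already proved, one for each inequality, so the plan is to read each off the corresponding statement in the language of $p(k,\ell)$.

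\emph{Lower bound.} By definition, $p(3,\ell)$ is the largest $\alpha$ such that every bipartite $G$ with no matching of size $3$ and no co-matching of size $\ell$ has a pure pair $(X,Y)$ with $|X|\ge\alpha|A|$ and $|Y|\ge\alpha|B|$. Corollary \ref{cor:k=3-lb} gives exactly this with $\alpha=1/(3\ell+\frac13\log\ell-3)$, valid for all $\ell\ge2$ and all such $G$. Hence $p(3,\ell)$ is at least this value. For definiteness, its proof shows $p(3,\ell)\ge 1/s_{3,\ell}$ with $s_{3,\ell}=a_\ell\le 3\ell+\frac13\log\ell-3$.

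\emph{Upper bound.} Fix $\ell$ and apply Theorem \ref{thm:k=3-ub}. It gives, for all sufficiently large $m,n$, a graph $G$ with no matching of size $3$, no co-matching of size $\ell$, and no pure pair with both proportions at least $\beta_\ell:=(2^{3/5}+o(1))\ell^{-1/5}$. Suppose $\alpha>\beta_\ell$. Then this $G$ is a witness that the defining property of $p(3,\ell)$ fails at $\alpha$: any pure pair with $|X|\ge\alpha|A|$ and $|Y|\ge\alpha|B|$ would also satisfy the forbidden inequalities with $\beta_\ell$. Hence $p(3,\ell)\le\beta_\ell$.

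The only point needing care is interpreting the $o(1)$, which is an asymptotic statement as $\ell\to\infty$. For each fixed $\ell$, the construction (the prime $q$ chosen from $x=(\ell/\sqrt2)^{2/5}$ and the graph $G_q$ of Theorem \ref{thm:w3q-complement}, then blown up) yields the explicit value $\eta_q$. Since $G_q$ has no pure pair with both proportions strictly greater than $\eta_q$, it gives $p(3,\ell)\le\eta_q$ directly. For small $\ell$, the trivial bound $p(3,\ell)\le 1$ absorbs any discrepancy into the $o(1)$ term. No new argument is required beyond quoting these two results.
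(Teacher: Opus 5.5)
Your proposal is correct and matches the paper, which likewise obtains Corollary~\ref{cor:k=3} by combining Corollary~\ref{cor:k=3-lb} (lower bound) with Theorem~\ref{thm:k=3-ub} (upper bound). Your extra remarks on the $o(1)$ term are accurate, though the paper does not spell them out: once $L_q\le\ell$, the single graph $G_q$ already gives $p(3,\ell)\le\eta_q$ without any blow-up, and small $\ell$ is absorbed by the asymptotic notation.
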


\subsection{Other forbidden families}\label{sec:families}

As noted in the introduction, Scott, Seymour and Spirkl \cite{scott2023} proved that forbidding a forest and a bicomplement of a forest is necessary and sufficient for the strong Erd\H{o}s-Hajnal property. Notably, this eliminates just matchings or co-matchings of size at least 3, as well as the half-graph. Taking a random bipartite graph with high girth (or its complement) provides such a construction. 

Now we turn to VC-dimension. We first relate it to induced matchings. 

\begin{lmm}\label{lmm:vc-dim}
    Suppose $G$ is a bipartite graph that does not contain a matching of size $r+1$. Then $G$ has VC-dimension at most $r$. 
\end{lmm}
\begin{proof}
    Suppose for the sake of contradiction it has VC-dimension at least $r+1$, say $X$ is shattered by $\{N(x)\}_{x \in V(G)}$ for $X = \{x_1, \dots, x_{r+1}\}$. In particular, $X = N(x) \cap X$ for some $x$, hence $X$ must lie entirely within one part of $G$, say $A$.

    Now there exists $Y = \{y_1, \dots, y_{r+1}\} \subseteq B$ such that $N(y_i) \cap X = \{x_i\}$, i.e. $G[X, Y]$ is an induced matching of size $r+1$, giving a contradiction.  
\end{proof}

With the construction forbidding a matching of size 3 (since its bicomplement is not a forest), we get Theorem \ref{thm:vc-dim}. 

\section{Generalisation to hypergraphs}

For $k$-uniform, $k$-partite hypergraphs, the situation rapidly degenerates as demonstrated below. 

\begin{thm}\label{thm:hypergraph-sr1}
    Let $\alpha \in (0,1]$, $k \ge 2$ be constants. For sufficiently large $n$, there exists a $k$-uniform $k$-partite hypergraph $G$ with $|A_i| = n$ for all $i$, satisfying the following properties:
    \begin{outline}
        \1 $G$ has at most:
        \[\left(\frac{kH(\alpha)}{\alpha^k} + o(1)\right)n^2\]
        (hyper-)edges, where $H(\alpha) = -\alpha\log_2(\alpha) - (1-\alpha)\log_2(1-\alpha)$,
        \1 $G$ contains no empty $k$-partite subgraph of size $\alpha n$,
        \1 its adjacency tensor $A(G) \in \mathbb{R}^{n \times \dots \times n}$ has slice rank 1.
    \end{outline}
\end{thm}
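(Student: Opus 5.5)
The plan is to build $G$ as a product of one full part with a sparse random $(k-1)$-partite hypergraph. Concretely, fix a $(k-1)$-uniform $(k-1)$-partite hypergraph $F$ on parts $A_2, \dots, A_k$ with about $Cn$ edges. Declare $(x_1, x_2, \dots, x_k)$ an edge of $G$ exactly when $x_1 \in A_1$ (arbitrary) and $(x_2, \dots, x_k) \in E(F)$. The adjacency tensor then factors as
\[A(G)(x_1, \dots, x_k) = \mathbf{1}(x_1)\cdot \mathbf{1}_{E(F)}(x_2, \dots, x_k).\]
This is a single slice, so the slice rank is at most $1$, and it equals $1$ as soon as $E(F) \neq \emptyset$. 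Moreover $|E(G)| = n|E(F)|$, so it suffices to have $|E(F)| \le \left(\frac{kH(\alpha)}{\alpha^k} + o(1)\right) n$.

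For emptiness, suppose $X_1 \times \dots \times X_k$ is an empty box with $|X_i| \ge \alpha n$. Since every $x_1 \in X_1$ is allowed, the box $X_2 \times \dots \times X_k$ must contain no edge of $F$. So it suffices that $F$ has no empty $(k-1)$-partite box with all sides of size at least $\alpha n$. It is enough to check boxes whose sides have size exactly $\lceil \alpha n \rceil$, since shrinking a box preserves emptiness.

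To build $F$, I will take $m$ independent uniformly random $(k-1)$-tuples in $A_2 \times \dots \times A_k$, so that $|E(F)| \le m$.
\begin{itemize}
\item For a fixed box with sides $\lceil \alpha n\rceil$, each tuple lands in it with probability at least $\alpha^{k-1}$. Hence the box is missed by all tuples with probability at most $(1-\alpha^{k-1})^m \le e^{-\alpha^{k-1} m}$.
\item The number of such boxes is ${n \choose \lceil \alpha n\rceil}^{k-1} \le 2^{(k-1)H(\alpha) n}$.
\item A union bound therefore succeeds once $m \ge (1+o(1))\,\frac{(k-1)H(\alpha)\ln 2}{\alpha^{k-1}}\, n$.
\end{itemize}
Since $(k-1)\ln 2 \le k$ and $\alpha^{-(k-1)} \le \alpha^{-k}$, this gives
\[|E(G)| \le nm \le \left(\frac{kH(\alpha)}{\alpha^k}+o(1)\right)n^2.\]
The slack absorbs the rounding. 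For $\alpha = 1$ the statement is trivial, since one edge suffices. For $k = 2$, $F$ is just a subset of $A_2$, and any set of more than $(1-\alpha)n$ vertices works deterministically.

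I expect no real obstacle. The only care needed is in the entropy estimate ${n \choose \alpha n} \le 2^{H(\alpha)n}$ and in checking that the constants fit under $kH(\alpha)/\alpha^k$, which they do with room to spare.
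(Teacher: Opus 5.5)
Your argument is correct and is essentially the paper's construction. The paper also takes $G$ to be a cylinder over a sparse random set $S$ of $(k-1)$-tuples: it samples a $k$-partite Erd\H{o}s--R\'enyi hypergraph with $p=Cn^{-(k-1)}$ and $C>kH(\alpha)/\alpha^k$, union-bounds over $k$-boxes, controls $|E|$ by Chernoff, and then adds every edge $\{x_1,\dots,x_{k-1},y_k\}$, so that $A(G)=1_S(x_1,\dots,x_{k-1})$.

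Your version samples $m$ tuples directly in $k-1$ coordinates, which removes the need for Chernoff and yields the sharper constant $(k-1)H(\alpha)\ln 2/\alpha^{k-1}$. Your separate $k=2$ case is unnecessary, since the same union bound applies. If you keep it, use a set of size $\lfloor(1-\alpha)n\rfloor+1$ rather than ``any'' such set, because e.g.\ $S=A_2$ violates the edge bound for $\alpha$ near $1$.
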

\begin{proof}
    Consider the Erd\H{o}s-Rényi random $k$-uniform $k$-partite hypergraph on vertex sets $A_1 \sqcup \dots \sqcup A_k$ with $|A_i| = n$ for $i = 1, \dots, k$, and edges chosen with probability $p = Cn^{-k+1}$ where:
    \[C = C_{\alpha, k} > \frac{kH(\alpha)}{\alpha^k}.\]

    Let $M$ denote the number of empty $k$-partite subgraphs of size $\alpha n$. We have as $n \to \infty$, 
    \[\mathbb{E}[M] = {n \choose \alpha n}^k (1-p)^{\alpha^k n^k} \le {n \choose \alpha n}^k e^{-p\alpha^k n^k} = \exp(kH(\alpha)n + o(n) - C\alpha^k n) \to 0,\]
    using entropy bounds for binomial coefficients. 

    By the Chernoff bound, 
    \[\prob{|E| \ge (1+\delta) pn^k} \le \left(\frac{e^{\delta}}{(1+\delta)^{1+\delta}}\right)^{pn^k} \to 0\]
    as well, hence for sufficiently large $n$, we can find $G'$ with $M = 0$ and:
    \[|E(G')| \le (1+o(1)) pn^k = (1+o(1)) Cn.\]

    We perform an alteration. For each edge $\{x_1, \dots, x_k\}$ with $x_i \in A_i$ for $i = 1, \dots, k$ and each $y_k \in A_k$, add in the edge $\{x_1, \dots, x_{k-1}, y_k\}$. Let $G$ denote the resultant hypergraph. The adjacency tensor of $G$ is of the form:
    \[M_{x_1, \dots, x_k} = 1_S(x_1, \dots, x_{k-1}), \quad x_i \in \{0, 1\},\]
    hence has slice rank 1.

    Furthermore, adding new edges doesn't create new empty $k$-partite subgraphs. Thus, $G$ satisfies:
    \[|E(G)| \le |E(G')| \cdot n = (1+o(1))Cn^2\]
    and contains no empty $k$-partite subgraphs of size $\alpha n$. 
\end{proof}

This gives us Theorem \ref{thm:general-hyper}.

\begin{proof}[Proof of Theorem \ref{thm:general-hyper}]
    Take $G$ constructed from Theorem \ref{thm:hypergraph-sr1}. It has $O(n^2)$ edges hence cannot contain a complete subgraph of size $\alpha n$, which has $\alpha^k n^k$ edges, for $k \ge 3$ and sufficiently large $n$. 

    Furthermore, note that the adjacency tensors of the matching and co-matching of size two have slice rank two. Indeed, let $M \in \{0, 1\}^{2 \times \dots \times 2}$ denote the adjacency tensor of the matching, say. Thus, 
    \[M_{x_1, \dots, x_k} = \begin{cases}
        1 & x_1 = \dots = x_k \\
        0 & \text{otherwise},
    \end{cases}\]
    with slice rank clearly at most two. 
    
    Suppose for the sake of contradiction $M$ had slice rank one. Without loss of generality (by symmetry), suppose:
    \[M_{x_1, \dots, x_k} = 1_S(x_1, \dots, x_{k-1}).\]
    
    Then:
    \[1 = M_{0,\dots,0,0} = M_{0,\dots,0,1} = 0,\]
    contradiction. Identically, the adjacency tensor of the co-matching of size two also has slice rank 2. 
    
    Since slice rank is monotone, $G$ cannot contain a matching or co-matching of size two. 
\end{proof}

It turns out $n^2$ is the correct threshold for the number of edges to observe this behaviour. 

\begin{proof}[Proof of Theorem \ref{thm:sparse-hyper}]
    Let $S_1, \dots, S_k$ denote the vertices in each part with degree at most $\frac{1-\alpha}{2(k-1)(t-1)}n$. We have:
    \[|A_i\setminus S_i| \le \frac{|E(G)|}{\frac{(1-\alpha)}{2(k-1)(t-1)}n} \le \frac{1-\alpha}2 n\]
    hence $|S_i| \ge \frac{1+\alpha}{2}n$ for $i = 1, \dots, k$. Suppose for the sake of contradiction that all induced subgraphs of size $\alpha n$ contain an edge. We inductively construct a matching of size $t$ within $G[S_1, \dots, S_k]$. 

    Since $|S_i| \ge \frac{1+\alpha}{2}n \ge \alpha n$ for each $i$, it contains an edge. For the inductive step, assume we have $T_i \subseteq S_i$ with $|T_i| = s \le t-1$ and $G[T_1, \dots, T_k]$ is an induced matching. Let $T_i' \subseteq S_i$ denote all vertices in the $i$th part which are adjacent to some vertex in at least one of $T_1, \dots, T_{i-1}, T_{i+1}, \dots, T_k$. In particular, $T_i \subseteq T_i'$. Then:
    \[|T_i'| \le \underbrace{(k-1) \cdot s}_{\substack{\mathclap{\text{number of vertices in}}\\\text{$T_1, \dots, T_{i-1}, T_{i+1}, \dots, T_k$}}} \frac{1-\alpha}{2(k-1)(t-1)}n \le \frac{1-\alpha}{2}n.\]
    
    Now $|S_i \setminus T_i'| \ge \frac{1+\alpha}{2}n - \frac{1-\alpha}{2}n = \alpha n$, hence $G[S_1\setminus T_1', \dots, S_k \setminus T_k']$ contains an edge, say $\{x_1, \dots, x_k\}$. We can verify that $G[T_1 \cup \{x_1\}, \dots, T_k \cup \{x_k\}]$ is an induced matching of size $s+1$. 
\end{proof}

\subsection{Linear hypergraphs}

\emph{Linear hypergraphs} have edges that intersect in at most one vertex. Under the same assumptions as above, these are almost entirely empty. 

\begin{thm}\label{thm:linear-matchings}
    Let $k \ge 3$ be constant and let $t \ge 2$. Let $G$ be a $k$-uniform $k$-partite linear hypergraph with $|A_i| = n \ge (t-1)^{k-1}$ for all $i = 1, \dots, k$, such that $G$ does not contain a matching of size $t$. Then $G$ has an empty $k$-partite subgraph of size $n-(t-1)^{k-1}$. 
\end{thm}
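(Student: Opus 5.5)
The plan is to take a \emph{maximal} induced matching and use linearity to show that a small set of vertices in each part meets every edge. Throughout, a matching of size $s$ means $s$ pairwise disjoint edges $M = \{e_1, \dots, e_s\}$ such that $G$ restricted to $V(M)$ has exactly these edges. Write $V(M)_i = V(M) \cap A_i$, so $|V(M)_i| = s$.

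\emph{Setup.} Fix a maximal induced matching $M$, meaning no edge of $G$ can be added to $M$ keeping it an induced matching. By hypothesis $s \le t-1$. For each part $i$ we will build a set $D_i \subseteq A_i$ with $|D_i| \le s^{k-1} \le (t-1)^{k-1}$ such that every edge of $G$ meets some $D_i$. Then $G[A_1 \setminus D_1, \dots, A_k \setminus D_k]$ is empty. Shrinking each $A_i \setminus D_i$ to exactly $n - (t-1)^{k-1}$ vertices (possible since $n \ge (t-1)^{k-1}$) gives the required empty box. If $G$ has no edges the statement is trivial, so assume $s \ge 1$.

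\emph{Structure of the blocking edges.} Let $f$ be an edge disjoint from $V(M)$. By maximality, $M \cup \{f\}$ is not induced. Its edges are pairwise disjoint, so there is an edge $g \ne f$, $g \notin M$, with $g \subseteq V(M) \cup f$. We claim $g$ has the following shape.
\begin{itemize}
\item Linearity gives $|g \cap f| \le 1$ and $|g \cap e_j| \le 1$ for every $j$.
\item Since $G$ restricted to $V(M)$ contains only the edges of $M$, $g \not\subseteq V(M)$, so $|g \cap f| = 1$.
\end{itemize}
Say $g \cap f$ is the vertex of $f$ in part $i$. The remaining $k-1$ vertices of $g$ then lie in $V(M)$, one in each part $j \ne i$, and they come from $k-1$ \emph{distinct} matching edges.

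\emph{Definition of $D_i$.} Call a choice of one vertex of $V(M)_j$ for each $j \ne i$ a \emph{transversal} for part $i$; it is \emph{admissible} if its vertices come from pairwise distinct edges of $M$. By linearity, since $k - 1 \ge 2$, an admissible transversal lies in at most one edge of $G$, and that edge meets $A_i$ in a unique vertex. Let $D_i$ consist of $V(M)_i$ together with these vertices, one for each admissible transversal that extends to an edge.

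\emph{Covering.} Every edge meeting $V(M)$ meets some $V(M)_i \subseteq D_i$. Every edge $f$ disjoint from $V(M)$ has, by the previous paragraph, its part-$i$ vertex equal to $g \cap A_i$, where $g \setminus A_i$ is an admissible transversal; hence that vertex lies in $D_i$.

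\emph{Counting.} There are $s^{k-1}$ transversals for part $i$. The $s$ transversals taking all their vertices from a single edge $e_j$ are not admissible, so at most $s^{k-1} - s$ are admissible. Adding the $s$ vertices of $V(M)_i$ gives
\[|D_i| \le s^{k-1} - s + s = s^{k-1} \le (t-1)^{k-1}.\]

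The main obstacle is the structural step: proving that the edge witnessing non-inducedness of $M \cup \{f\}$ meets $f$ in exactly one vertex and meets $k-1$ distinct matching edges. This is the only place $k \ge 3$ and linearity are both used in full strength. Once it is in place, the definition of $D_i$ and the counting, including the saving of the $s$ non-admissible transversals that makes the bound exactly $(t-1)^{k-1}$, are routine.
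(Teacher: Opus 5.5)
Your proof is correct and is essentially the paper's argument. Your $D_i$ is the paper's $T_i \cup T_i'$, with the same count $(s^{k-1}-s)+s$, and your blocking-edge analysis is the contrapositive of the paper's extension step: an edge avoiding every $T_j \cup T_j'$ extends the induced matching, by the same linearity case analysis. The paper phrases this as a contradiction argument that grows the matching greedily, rather than starting from a maximal one.

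One small correction to your closing remark: the blocking-edge step does not use $k \ge 3$. That hypothesis is needed where a $(k-1)$-vertex transversal is shown to lie in at most one edge, and where the $s$ single-edge transversals are excluded from the count.
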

\begin{proof}
    We follow the structure of the proof of Theorem \ref{thm:sparse-hyper}. Suppose for the sake of contradiction that all induced subgraphs of size $n-(t-1)^{k-1}$ contain an edge. We again inductively construct a matching of size $t$ within $G$. 

    To start, $G$ contains an edge. For the inductive step, assume we have $T_i \subseteq A_i$ with $|T_i| = s \le t-1$ such that $G[T_1, \dots, T_k]$ is an induced matching. Fix index $j$ and let $T_j' \subseteq A_j \setminus T_j$ denote the vertices in $A_j$ such that for each $y_j \in T_j'$, there exists $x_i \in T_i$ for each $i \neq j$ such that $\{x_1, \dots, x_{j-1}, y_j, x_{j+1}, \dots, x_k\}$ forms an edge.

    For each choice of $(x_1, \dots, x_{j-1}, x_{j+1}, \dots, x_k)$, at most one $y_j$ exists since $G$ is linear. Furthermore, there are $s$ disjoint edges $(x_1, \dots, x_k)$ already. Thus, $|T_j'| \le s^{k-1} - s \le (t-1)^{k-1} - s$ hence $|A_j \setminus (T_j \cup T_j')| \ge n - (t-1)^{k-1}$. 

    By assumption, $G[A_1 \setminus (T_1 \cup T_1'), \dots, A_k \setminus (T_k \cup T_k')]$ contains an edge, say $\{y_1, \dots, y_k\}$. We claim that $G[T_1 \cup \{y_1\}, \dots, T_k \cup \{y_k\}]$ is an induced matching of size $s+1$. 
    
    \emph{Case 1:} If such a $k$-tuple contains none of $y_1, \dots, y_k$, then we reduce to $G[T_1, \dots, T_k]$. 
    
    \emph{Case 2:} If it contains one such $y_j$, by construction of $T_j'$, it is not an edge.
    
    \emph{Case 3:} If it contains at least two and is not the edge $\{y_1, \dots, y_k\}$, then it intersects this edge in at least two vertices. Since $G$ is linear, it cannot be an edge either. 
    
    Thus, the only additional edge is $\{y_1, \dots, y_k\}$. 
\end{proof}

One can generalise this proof to arbitrary vertex class sizes.

\begin{thm}\label{thm:linear-unequal}
    Let $t \ge 2$, $k \ge 3$ be constants. Let $G$ be a $k$-uniform, $k$-partite linear hypergraph with $|A_i| = n_i \ge (t-1)^{k-1}$ for all $i = 1, \dots, k$, such that $G$ does not contain a matching of size $t$. Then $G$ has an empty $k$-partite subgraph with parts of size:
    \[n_i - (t-1)^{k-1}, \quad i = 1, \dots, k.\]
\end{thm}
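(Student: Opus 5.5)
The plan is to rerun the greedy argument from the proof of Theorem \ref{thm:linear-matchings}, replacing the common size $n$ with the individual sizes $n_i$. Suppose for contradiction that every box $(Z_1, \dots, Z_k)$ with $Z_i \subseteq A_i$ and $|Z_i| = n_i - (t-1)^{k-1}$ contains an edge. Since each $n_i \ge (t-1)^{k-1}$, these targets are non-negative. If some $n_i = (t-1)^{k-1}$, the corresponding box has an empty part and is trivially an empty $k$-partite subgraph. So we may assume every target size is positive. We will then build an induced matching of size $t$, contradicting the hypothesis.

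Start with any edge of $G$; one exists by the assumption applied to any box of the target sizes. For the inductive step, suppose $T_i \subseteq A_i$ with $|T_i| = s \le t-1$ and $G[T_1, \dots, T_k]$ is an induced matching. For each $j$, define $T_j' \subseteq A_j \setminus T_j$ as the set of $y_j$ that complete some $(k-1)$-tuple $(x_i)_{i \ne j}$, $x_i \in T_i$, to an edge. By linearity, each such $(k-1)$-tuple has at most one completion. The $s$ tuples coming from the existing matching edges complete inside $T_j$. Hence $|T_j'| \le s^{k-1} - s$, and so
\[|A_j \setminus (T_j \cup T_j')| \ge n_j - s^{k-1} \ge n_j - (t-1)^{k-1}.\]
Choose subsets of exactly the target sizes inside these sets. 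The contradiction assumption gives an edge $\{y_1, \dots, y_k\}$ there.

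The three-case check from Theorem \ref{thm:linear-matchings} applies unchanged, since it never uses the part sizes. A tuple using no $y_j$ lies in the old matching. A tuple using exactly one $y_j$ is a non-edge by the definition of $T_j'$. A tuple using at least two of the $y_j$, other than $\{y_1, \dots, y_k\}$ itself, shares two vertices with that edge, so it is a non-edge by linearity. Thus $G[T_1 \cup \{y_1\}, \dots, T_k \cup \{y_k\}]$ is an induced matching of size $s+1$, and iterating up to $s = t$ gives the contradiction.

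There is no real obstacle. The only points needing care are the bookkeeping of distinct sizes, the fact that subsets of the target size exist (guaranteed by the displayed bound), and the degenerate case where some $n_i = (t-1)^{k-1}$.
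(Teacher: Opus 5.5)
Your proposal is correct and is the argument the paper intends. The paper gives no separate proof; it only remarks that the proof of Theorem~\ref{thm:linear-matchings} generalises to unequal class sizes. Your rerun with $n_j$ in place of $n$ is that generalisation: it uses the bound $|A_j \setminus (T_j \cup T_j')| \ge n_j - s^{k-1}$, the same three-case check, and treats the degenerate case $n_i = (t-1)^{k-1}$ explicitly as a harmless extra.
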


By the monotonicity of slice rank and the slice rank of a diagonal tensor from Tao \cite{tao2016}, we obtain the following corollary.

\begin{cor}
    Let $k \ge 3$ and $r \ge 1$ be constants. Let $G$ be a $k$-uniform $k$-partite linear hypergraph with vertex classes $|A_i| = n_i \ge r^{k-1}$ for $i = 1, \dots, k$, such that the adjacency tensor of $G$ has slice rank $r$. Then $G$ has an independent set of size:
    \[\sum_{i = 1}^k n_i - kr^{k-1}.\]
\end{cor}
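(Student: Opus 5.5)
The corollary will follow from Theorem \ref{thm:linear-unequal} once we know that slice rank $r$ excludes an induced matching of size $r+1$. So the plan is to check this, apply the theorem with $t = r+1$, and then turn the resulting empty box into an independent set.

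\emph{Step 1: no induced matching of size $r+1$.} Suppose $G$ contained an induced matching of size $r+1$, given by parts $T_i \subseteq A_i$ with $|T_i| = r+1$. The adjacency tensor $A(G)$ restricted to $T_1 \times \dots \times T_k$ is exactly the adjacency tensor of this induced subgraph, which is a diagonal tensor with $r+1$ nonzero diagonal entries once we order each $T_i$ along the matching edges.

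By Tao \cite{tao2016}, this subtensor has slice rank $r+1$. Slice rank is monotone under taking subtensors, because restricting each rank-one slice decomposition term to the subboxes keeps it rank-one in the slice sense. Hence the slice rank of $A(G)$ would be at least $r+1$, a contradiction. So $G$ contains no matching of size $t := r+1$.

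\emph{Step 2: apply the theorem.} The hypothesis $n_i \ge r^{k-1} = (t-1)^{k-1}$ matches Theorem \ref{thm:linear-unequal}, and $t = r+1 \ge 2$. The theorem then gives sets $X_i \subseteq A_i$ with $|X_i| = n_i - r^{k-1}$ such that $G[X_1, \dots, X_k]$ contains no edge.

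\emph{Step 3: from empty box to independent set.} Every edge of $G$ has exactly one vertex in each $A_i$. An edge lying inside $X_1 \cup \dots \cup X_k$ must therefore use one vertex from each $X_i$, so it would be an edge of $G[X_1, \dots, X_k]$. No such edge exists, so $\bigcup_i X_i$ is an independent set of size $\sum_i n_i - kr^{k-1}$.

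The only point needing care is the monotonicity of slice rank. This is standard: if $T = \sum_j f_j(x_{i_j})\, g_j(x_{-i_j})$, then restricting every variable to a subset preserves the form of each term. Everything else is bookkeeping.
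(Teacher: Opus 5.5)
Your proposal is correct and follows the paper's route. Monotonicity of slice rank together with Tao's result on diagonal tensors rules out an induced matching of size $r+1$, so Theorem~\ref{thm:linear-unequal} with $t = r+1$ gives an empty box with parts of size $n_i - r^{k-1}$, whose union is the required independent set (a step the paper leaves implicit and you spell out).
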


\section{Concluding remarks}

As noted in the introduction, the strong Erd\H{o}s-Hajnal property holds when both a forest and its bicomplement are forbidden. Our quantitative bounds in Theorems \ref{thm:graph-lb} and \ref{thm:graph-ub} are catered to induced matchings and co-matchings, hence do not provide an immediate generalisation to other natural families of forests (or in the context of the log-rank conjecture, full-rank acyclic matrices). 

Furthermore, taking our forbidden family to be all full rank $r \times r$ submatrices gives exactly the log-rank conjecture, hence conjecturally $\eps$ behaves like $1/2^{\text{polylog(r)}}$ instead of exponential. It would be interesting to find a proof which works for other patterns and to classify families giving various growth rates of $\eps$. \\

{\bf Open problems:} We provide suggestions for further investigation. 
\begin{outline}[enumerate]
    \1 What is the true base of the exponential in $p(t)$?
    \1 What is the true polynomial growth of $p(3,\ell)$? 
    \1 To concretely generalise to other families: let $\mathcal{H} = \{H_1, H_2, \dots\}$ denote a family of bipartite forests where $H_t$ has order $t$ and $H_t$ is an induced subgraph in $H_{t+1}$. Define $p_\mathcal{H}(t)$ to be the analogue of $p(t)$ for $\mathcal{H}$, where one forbids induced $H_t$ and its bicomplement. For which families is $p_H(t)$ exponential in $t$?
\end{outline}

\section*{Acknowledgments} 

The author is grateful to Cosmin Pohoata for introducing the problem and multiple fantastic discussions. The author is also grateful to Gaia Carenini for many helpful suggestions, including an improvement to Theorem \ref{thm:k=3-ub} (originally a random construction). The author would like to thank Marcelo Campos for communicating the idea behind the construction in Theorem \ref{thm:graph-ub}. 

Part of this research was conducted during the 2025 Baruch College Discrete Mathematics REU. The author is grateful to the Global Talent Fund for making participation in the REU possible and to Jane Street for supporting the REU. The author is also grateful to Adam Sheffer, Guy Moshkovitz and Imre Leader for their continued guidance and mentorship. 

\bibliographystyle{plain}
\bibliography{citations.bib}

\end{document}